\documentclass[12pt]{amsart}
\usepackage[utf8]{inputenc}
\usepackage{amsthm}
\usepackage{amsmath}
\usepackage{amssymb}
\usepackage{comment}
\usepackage{tikz}
\usepackage[margin=1.2in]{geometry}

\usepackage[T1]{fontenc}
\usepackage{enumitem}
\usepackage{setspace}
\usepackage{microtype}
\usepackage{cite}
\usetikzlibrary{decorations.pathreplacing}

\allowdisplaybreaks

\usepackage[colorlinks=true, pdfstartview=FitV, linkcolor=blue, citecolor=blue, urlcolor=blue]{hyperref}

\newtheorem{theorem}{Theorem}[section]
\newtheorem{definition}[theorem]{Definition}
\newtheorem{lemma}[theorem]{Lemma}
\newtheorem{prop}[theorem]{Proposition}
\newtheorem{corollary}[theorem]{Corollary}
\newtheorem*{remark}{Remark}

\newtheorem{conj}[theorem]{Conjecture}

\providecommand{\wh}{\widehat}

\providecommand{\C}{\mathbb C}

\providecommand{\R}{\mathbb R}

\providecommand{\M}{\mathcal{M}}

\providecommand{\J}{\mathcal{J}}

\providecommand{\supp}{\text{supp }}

\providecommand{\sQ}{\mathcal{Q}}
\providecommand{\T}{\mathcal{T}}
\providecommand{\J}{\mathcal{J}}
\providecommand{\L}{\mathcal{L}}

\providecommand{\bS}{\mathbf{S}}

\title[Strip Decoupling]{Strip Decoupling Inequalities for the Paraboloid in $\R^3$}

\author{Jacob Glidewell}
\address{Dept. of Mathematics \\
Indiana University \\
 Bloomington, IN, 47405-7106, USA}
\email{jaglide@iu.edu}

\begin{document}

\begin{abstract}
    Large-cap or large-ensemble decoupling inequalities were initially studied by Demeter\cite{demeter2026BeyondCanonical}. These are intimately connected to the reverse square-function conjecture and restriction conjecture in Fourier restriction theory. By strips, we mean a partition of the $R^{-1}$-neighborhood of the paraboloid into $1\times R^{-1/2}\times R^{-1}$ curved large-caps. In this article, we prove the sharp $\ell^2(L^{10/3})$ decoupling estimate for strips in $\R^3$. We reduce the strip decoupling inequalities to a planar incidence problem, which is dealt with via two-ends Furstenberg incidence estimates. Our method is inspired by Wang--Wu\cite{wang2024restrictionestimatesusingdecoupling} on the restriction conjecture.
\end{abstract}

\maketitle

\section{Introduction}\label{sec:Intro}

For a box $\Omega\subset \R^3$ and any $F:\R^3\to \C$, let $\psi_\Omega\in C^\infty(\R^3)$ be a bump function adapted to $\Omega$. Define $\wh{F_\Omega} = \wh{F}\psi_{\Omega}$. Consider the paraboloid in $\R^3$ $$\mathcal{M} = \{(\xi_1, \xi_2, \xi_1^2 +\xi_2^2)\; :\; |\xi_i|\le 1\}$$ and let $\mathcal{M}(R^{-1})$ be the $R^{-1}$-neighborhood of $\M$. Let $\{\theta\}$ be the partition of $\M(R^{-1})$ into the canonical caps of size $R^{-1/2}\times R^{-1/2}\times R^{-1}$ and let $\{S\}$ be a partition of $\M(R^{-1})$ into the curved large-caps of size $1\times R^{-1/2}\times R^{-1}$. That is, each $S$ is the intersection of $\M(R^{-1})$ with a box of dimensions $1\times R^{-1/2}\times 1$. See Figure \ref{fig:strips}. Moreover, each $S$ is essentially a union of canonical caps $\theta$ whose centers are coplanar. 

Our main result is the $\ell^2(L^{10/3})$ decoupling inequality into strips $\{S\}$. This simultaneously answers Question 3.4 ($\ell^2(L^3)$ strip decoupling) and Question 3.5 ($\ell^{10/3}(L^{10/3})$ strip decoupling) of Demeter in \cite{demeter2026BeyondCanonical}.

\begin{theorem}\label{thm:stripDec}
    For each $B_R\subset \R^3$, $F: \R^3\to \C$ with $\supp \wh{F}\subset \mathcal{M}(R^{-1})$ and each $2\le p\le \frac{10}{3}$, we have $$\|F\|_{L^p(B_R)}\lessapprox \left(\sum_{S} \|F_S\|_{L^p(w_{B_R})}^2\right)^{1/2},$$ where $F_S= \sum_{\theta \subset S}F_\theta$.
\end{theorem}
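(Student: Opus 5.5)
By interpolation with the trivial $p=2$ case (orthogonality), it suffices to prove the endpoint $p=10/3$. I will set up an induction on scales for the best constant $D(R)$ in
\[
\|F\|_{L^{10/3}(B_R)}\le D(R)\Big(\sum_S\|F_S\|^2_{L^{10/3}(w_{B_R})}\Big)^{1/2},
\]
and aim for $D(R)\lessapprox 1$.

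\textbf{Step 1: wave packets and pigeonholing.} First I decompose each $F_\theta$ into wave packets $F_T$ on tubes $T$ of size $R^{1/2}\times R^{1/2}\times R$. I dyadically pigeonhole so that all packets have comparable amplitude. By a standard $\ell^2$ pigeonholing, all strips $S$ that appear carry comparable $\|F_S\|_{L^{10/3}}^{10/3}$, and hence comparable numbers of tubes. Since the caps $\theta\subset S$ have coplanar centers, the tubes of $F_S$ point in directions lying in a plane. The key geometric point is that $F_S$ is essentially a function of two variables after projecting along the normal of that plane, modulo the $R^{-1}$ curvature in the long direction.

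\textbf{Step 2: broad/narrow reduction.} Next I run a Bourgain--Guth style broad/narrow split at an intermediate scale $K$.
\begin{itemize}
\item In the \emph{narrow} case, $F$ is dominated by caps near a single strip direction. Parabolic rescaling then reduces to decoupling at a smaller scale, which gives the recursion $D(R)\le C\,D(R/K^2)$, or alternatively lets me apply planar decoupling for the parabola in the strip direction.
\item In the \emph{broad} case, I use the refined/$\ell^2$ decoupling of Bourgain--Demeter into canonical caps combined with a square-function (Kakeya-type) count. This bounds $\|F\|_{L^{10/3}}$ by an incidence quantity: for a shading by $R^{1/2}$-balls $Q$ of high multiplicity, one needs to control how many strips contribute at $Q$.
\end{itemize}
Following Wang--Wu, I will use the refined decoupling to bound $\|F\|^{10/3}_{L^{10/3}(Q)}$ by $(\#\text{strips through }Q)^{\alpha}$ times the $\ell^2$ sum. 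This reduces the estimate to bounding
\[
\sum_Q \#\{S: Q \text{ meets } F_S\}^{2/3}.
\]
Because each strip's tubes lie in a family of parallel planes, after slicing by a generic horizontal plane this becomes a count of incidences between points and $\delta$-tubes in $\R^2$ ($\delta=R^{-1/2}$), one tube per strip slab.

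\textbf{Step 3: planar incidence.} This is the main step. The two-ends condition comes from the pigeonholing in Step 1: each strip's contribution is spread along its planar slab rather than concentrated in one ball, and this is arranged by an induction on the concentration scale as in Wang--Wu. With that in hand, I apply the two-ends Furstenberg incidence estimate (Wang--Wu / Fu--Ren type). It gives that the number of $r$-rich $\delta$-balls is
\[
\lessapprox \frac{\#\text{tubes}}{r^{3}}\cdot(\text{spacing factors}),
\]
which is precisely the exponent balance matching $p=10/3$, the Stein--Tomas-like numerology $\frac{2(n+2)}{n}$ with $n=3$ in the slicing.

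The main obstacle will be Step 3. It requires verifying that the strip geometry genuinely yields a two-ends, Frostman-type tube configuration, and matching the exponents so that the rich-ball bound closes the induction without losing powers of $K$. If two-ends fails at some scale, I will recurse by rescaling into the concentrated ball, and absorb the loss into $D(R/K^2)$.
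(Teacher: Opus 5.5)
\textbf{The broad/narrow step does not close for an $\ell^2$ right-hand side.} In the narrow case you decouple into $K^{-1}$-caps $\tau$ and apply the induction hypothesis to each rescaled $\tau$. That part is fine: $S\cap\tau$ does rescale to a strip at scale $R/K^2$. The problem is what it leaves on the right, namely $\big(\sum_S\sum_\tau\|F_{S\cap\tau}\|_{L^p}^2\big)^{1/2}$. To return to $\big(\sum_S\|F_S\|_{L^p}^2\big)^{1/2}$ you would need $\big(\sum_\tau\|F_{S\cap\tau}\|_{L^p}^2\big)^{1/2}\lesssim\|F_S\|_{L^p}$, and this is false for $p>2$. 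Minkowski only shows that the left side dominates $\|(\sum_\tau|F_{S\cap\tau}|^2)^{1/2}\|_{L^p}$. If the narrow line is parallel to the strips and the pieces $F_{S\cap\tau}$ are spatially separated, you lose $K^{\frac12-\frac1p}$ at every step, hence a power of $R$ after iterating. The square-function form with $\bS_SF$ has the same defect, and planar decoupling in the strip direction also goes the wrong way.

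This is exactly the obstruction the paper flags ("the broad-narrow argument destroys the $\ell^2$ structure of the narrow term"). It is why the paper's argument is entirely linear: it uniformizes the wave-packet statistics, interpolates $L^2$ orthogonality with $L^4$ refined decoupling, and reduces to a per-strip multiplicity estimate.

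Your non-two-ends fallback hides a similar cost. Localizing to $R^{1-\eta_2}$-balls is not parabolic rescaling. The induction there produces strips of width $R^{-(1-\eta_2)/2}$, which must be flat-decoupled into $R^{\eta_2/2}$ strips at cost $R^{\frac{\eta_2}{2}(\frac12-\frac1p)}$. The paper pays for this with a vector-valued $L^2\to L^2(\ell^2)$, $L^4\to L^4(\ell^2)$ interpolation over the balls, using $\beta\le R^{\eta_3}$ to gain $R^{-\eta_2(\frac2p-\frac12)}$. This gain beats the loss exactly when $p\le\frac{10}{3}$; "absorb into $D(R/K^2)$" does not supply it.

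\textbf{The incidence reduction is misformulated.} Because the right side is an $\ell^2$ sum over strips, the estimate must be proved strip by strip, in the form $\big(\sum_{Q\in\sQ_X}m^X_S(Q)\big)^{p/2}\lessapprox\lambda^{\frac p2-1}(\#\sQ_X)^{\frac p2-1}\sum_{Q\subset B_R}m_S(Q)^{p/2}$. The number of strips through $Q$, with exponent $2/3$, is not the relevant quantity.

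Slicing by horizontal planes also destroys the geometry. The tubes of $\T_S$ have directions orthogonal to $(0,1,2c_S)$. So every strip's plates meet $\{x_3=t\}$ in slabs parallel to the $x_1$-axis, and each tube meets it in a disc, leaving no Furstenberg configuration. The correct move is to tile $B_R$ by $R\times R\times R^{1/2}$ plates orthogonal to $(0,1,2c_S)$. Each tube of $\T_S$ lies in essentially one plate, inside a plate you have a genuinely planar $R^{1/2}$-tube problem, and H\"older recombines the plates.

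Finally, the Katz--Tao hypothesis of the two-ends Furstenberg estimate is not supplied by the strip geometry. The paper pigeonholes the Katz--Tao constant $K$ and splits into $\sim K$ Katz--Tao subfamilies to get $\#\T_K\lessapprox KR^{-\frac12+\frac{\eta_2}{2}}\lambda^{-3/2}\#\sQ_X$. It then balances this against the fat-tube bound $\#\T_K\lessapprox K^{1-\frac p2}R^{-1/2}\sum_Q m(Q)^{p/2}$ from the Kakeya-type argument of the $p\le 3$ case. Eliminating $K$ gives the gain $\lambda^{\frac32-\frac p4}$, which is $\lesssim\lambda^{\frac p2-1}$ precisely when $p\le\frac{10}{3}$. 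Your Step 3 names this as the main obstacle but does not provide this mechanism.
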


A bush example shows that $\frac{10}{3}$ is optimal here; see subsection \ref{subsect:Examples}. By a rotational transformation, we may assume the strips are narrow in the $\xi_2$ direction. Let $\{I\}$ be a partition of $[-1,1]$ into intervals of length $R^{-1/2}$. Then, each strip $S$ is the $R^{-1}$-neighborhood of $$\{(\xi_1, \xi_2, \xi_1^2+\xi_2^2) \; : |\xi_1|\le 1, \xi_2\in I\}$$ for some $R^{-1/2}$-interval $I$; see Figure \ref{fig:strips}.

\begin{figure}
    \centering
    \begin{tikzpicture}[
  x={(2.5cm,-0.65cm)},
  y={(1.0cm,1.0cm)},
  z={(0cm,2.0cm)},
  line join=round
]
  \filldraw[fill=gray!10,draw=gray!65]
    (-1,-1,0) -- (1,-1,0) -- (1,1,0) -- (-1,1,0) -- cycle;

  \foreach \a in {-1,-0.8,...,1}{
    \draw[gray!65,thin] (\a,-1,0) -- (\a,1,0);
  }

  \foreach \a/\b in {
    -1/-0.8, -0.8/-0.6, -0.6/-0.4, -0.4/-0.2, -0.2/0,
    0/0.2, 0.2/0.4, 0.4/0.6, 0.6/0.8, 0.8/1
  }{
    \fill[blue!18,opacity=0.85]
      plot[domain=-1:1,samples=25,variable=\t]
        ({\a},{\t},{0.8+0.55*(\a*\a+\t*\t)})
      --
      plot[domain=1:-1,samples=25,variable=\t]
        ({\b},{\t},{0.8+0.55*(\b*\b+\t*\t)})
      -- cycle;
  }

  \foreach \a in {-1,-0.8,...,1}{
    \draw[blue!70!black,thin]
      plot[domain=-1:1,samples=35,variable=\t]
        ({\a},{\t},{0.8+0.55*(\a*\a+\t*\t)});
  }

  \foreach \b in {-1,1}{
    \draw[blue!70!black]
      plot[domain=-1:1,samples=35,variable=\t]
        ({\t},{\b},{0.8+0.55*(\t*\t+\b*\b)});
  }

  \fill[blue!45,opacity=0.65]
    plot[domain=-1:1,samples=35,variable=\t]
      ({-0.6},{\t},{0.8+0.55*(\t*\t+0.6*0.6)})
    --
    plot[domain=1:-1,samples=35,variable=\t]
      ({-0.4},{\t},{0.8+0.55*(\t*\t+0.4*0.4)})
    -- cycle;

  \filldraw[fill=orange!55,fill opacity=0.85,
            draw=blue!70!black,thin]
    plot[domain=0.4:0.6,samples=12,variable=\t]
      ({\t},{0.6},{0.8+0.55*(\t*\t+0.6*0.6)})
    --
    plot[domain=0.6:0.8,samples=12,variable=\t]
      ({0.6},{\t},{0.8+0.55*(0.6*0.6+\t*\t)})
    --
    plot[domain=0.6:0.4,samples=12,variable=\t]
      ({\t},{0.8},{0.8+0.55*(\t*\t+0.8*0.8)})
    --
    plot[domain=0.8:0.6,samples=12,variable=\t]
      ({0.4},{\t},{0.8+0.55*(0.4*0.4+\t*\t)})
    -- cycle;

  \foreach \a in {-1,1}{
    \foreach \b in {-1,1}{
      \draw[gray!60,dashed]
        (\a,\b,0) -- (\a,\b,{0.8+0.55*(\a*\a+\b*\b)});
    }
  }
  \node[above left] at (-1,0.75,0) {$\xi_1$};
  \node[below right]  at (0.75,-1,0) {$\xi_2$};

  \node[below right]  at (-0.74,0.75,1) {$S$};
  \node[above right]  at (0.43,0.6,1.1) {$\theta$};
\end{tikzpicture}
    \caption{Strips on the paraboloid in $\R^3$}
    \label{fig:strips}
\end{figure}

Strips have presented a major obstacle due to their anisotropic scaling; that is, parabolic rescaling changes the dimensions of curved large-caps. Moreover, pigeonholing or pruning strips may destroy some of the curvature in the support of $\wh{F}$ which is crucial for the desired decoupling. We wish to work with canonical caps $\theta$ to avoid these challenges. From this, we are motivated to study the visually stronger, but equivalent, statement of Theorem \ref{thm:strongStripDec}. This stands as an intermediate between canonical decoupling and reverse square-function inequalities. For convenience, define $$\bS_SF = \left(\sum_{\theta\subset S}|F_\theta|^2\right)^{1/2}$$ for each strip $S$.
\begin{theorem}\label{thm:strongStripDec}
    For each $B_R\subset \R^3$, $F: \R^3\to \C$ with $\supp \wh{F}\subset \mathcal{M}(R^{-1})$ and each $2\le p\le \frac{10}{3}$, we have $$\|F\|_{L^p(B_R)}\lessapprox \left(\sum_{S} \left\|\bS_SF\right\|_{L^p(w_{B_R})}^2\right)^{1/2}.$$
\end{theorem}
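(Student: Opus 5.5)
Because $\bS_SF\le(\sum_\theta|F_\theta|^2)^{1/2}$ pointwise, Theorem \ref{thm:strongStripDec} is exactly the kind of statement where canonical objects carry the argument, and I plan to prove it by a broad/narrow induction on scales in the spirit of Wang--Wu, with the endpoint $p=\frac{10}{3}$ as the main case. The range $2\le p<\frac{10}{3}$ should then follow by interpolation with the trivial $L^2$ orthogonality estimate, after the usual reduction to wave packets of comparable amplitude.

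\textbf{Step 1 (normalization and wave packets).} Decompose each $F_\theta$ into wave packets on tubes $T$ of dimensions $R^{1/2}\times R^{1/2}\times R$. By dyadic pigeonholing I will assume that all packets have comparable amplitude. I will also assume that for each strip $S$ the quantity $\|\bS_SF\|_{L^{10/3}}$ is essentially constant over the strips that contribute, and that the number of tubes per strip is uniform. The role of $\bS_S$ is that, within a strip, no curvature has to be retained under these prunings: $\bS_SF$ only sees $|F_\theta|$. This avoids the difficulty that pruning strips can destroy curvature.

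\textbf{Step 2 (broad/narrow).} At scale $K$, split $B_R$ according to whether $F$ is dominated by the contribution from a single $K^{-1}$-strip-sector, or whether it is broad, with three transverse $K^{-1}$-caps contributing. In the narrow case I will rescale: a $K^{-1}\times 1$ strip of caps maps parabolically to a thinner paraboloid piece. Restricting to $\xi_2$ narrow preserves the strip structure, since strips are narrow in $\xi_2$. So induction on $R$, with loss $K^{\epsilon}$ absorbed by $K\gg1$, handles this case. Narrowness in $\xi_1$ instead is handled by $\ell^2$ decoupling in the $\xi_1$ direction alone, which is the cylinder/parabola decoupling, combined with induction.

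\textbf{Step 3 (the broad case, which is the main obstacle).} In the broad part I will use the trilinear structure together with the counting of wave packets. At $p=\frac{10}{3}$, for each high-density cube $Q$ of side $R^{1/2}$, the local $L^{10/3}$ mass is controlled by the number of tubes through $Q$. Planar slices now enter because the tubes belonging to a fixed strip $S$ have directions lying essentially on a curve, so their cores lie in $R^{1/2}$-neighbourhoods of planes (slabs). The inequality therefore reduces to an incidence bound between tubes organized in planar families and $R^{1/2}$-cubes. Inside each plane this becomes a Furstenberg-type problem for $\delta=R^{-1/2}$ tubes in $\R^2$.

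I will first try the two-ends Furstenberg estimate in the plane, justified by a two-ends reduction: pass to a subset of $\theta$-tubes whose shadings are spread along the tube, losing only $R^{\epsilon}$ via the standard iterative argument. The required bound should say that the number of $r$-rich cubes is at most $\lessapprox |\T|^{?}r^{-3}$-type with the $10/3$ numerology. Summing the broad estimate over planes then yields $(\sum_S\|\bS_SF\|^2)^{1/2}$ by Hölder, using the uniformity from Step 1. I expect the delicate point to be matching exponents, that is, checking that the two-ends Furstenberg bound gives exactly $\frac{10}{3}$, as the bush example predicts.
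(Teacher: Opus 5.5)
\textbf{Step 2 is the main gap.} The paper deliberately avoids any broad/narrow decomposition, because the narrow term destroys the $\ell^2$ structure over strips, and your sketch does not get around this.

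In $\R^3$, the alternative to trilinear broadness is that the significant caps lie in a $K^{-1}$-neighbourhood of a line, i.e.\ in a $K^{-1}\times 1$ slab, not in a single cap. Such a slab is not a parabolic image of $\M$: straightening it gives a surface like $\xi_3=\xi_1^2+K^{-2}\xi_2^2$, so there is no $C(R')$ to induct on. This anisotropic scaling is exactly the obstacle flagged in the introduction.

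If instead you split the slab into $K^{-1}$-caps $\tau$, which do rescale, then lower-dimensional decoupling, Minkowski over the $B_K$'s, and induction give $K^{\epsilon}C(R/K^2)\bigl(\sum_S\sum_\tau\|\bS_SF_\tau\|_p^2\bigr)^{1/2}$. But $\bS_SF=(\sum_\tau|\bS_SF_\tau|^2)^{1/2}$, with $\sim K$ caps $\tau$ meeting each $S$. For $p>2$ the triangle inequality in $L^{p/2}$ gives $\|\bS_SF\|_p^2\le\sum_\tau\|\bS_SF_\tau\|_p^2$, which is the wrong direction. The loss can be $K^{\frac12-\frac1p}$ per step, and this compounds to a power of $R$. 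So your claim of ``loss $K^\epsilon$, absorbed by $K\gg1$'' is unjustified. A single dominant cap would be harmless by Minkowski, but that is not what non-broadness gives in $\R^3$.

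\textbf{Step 3 is missing the two ingredients that carry the paper's proof.}

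First, two-endedness cannot be obtained by pigeonholing at $R^{\epsilon}$ cost. The bush is the sharp example, it is trilinearly broad, and it has no two-ends subfamily. For it the planar Kakeya bound with gain $\lambda^{2-\frac p2}$ is sharp, whereas $p>3$ needs $\lambda^{\frac p2-1}$. The paper therefore makes the two-ends dichotomy the induction itself. Shadings are measured on $R^{1-\eta_2}$-segments, and if $\beta\le R^{\eta_3}$:
\begin{enumerate}
\item apply $C(R^{1-\eta_2})$ on each ball $B$;
\item flat-decouple $(R^{1-\eta_2})^{-1/2}$-strips into $R^{-1/2}$-strips, at cost $R^{\frac{\eta_2}{2}(\frac12-\frac1p)}$;
\item recover this cost by vector-valued interpolation between an $L^2$ bound $\beta^{1/2}R^{-\eta_2/2}$ and an $L^4$ bound $\beta^{1/2}$.
\end{enumerate}
This closes exactly when $\frac2p-\frac12\ge\frac14-\frac1{2p}$, i.e.\ $p\le\frac{10}{3}$.

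Second, $|F|$ on a cube $Q$ is not controlled by $a\,m(Q)^{1/2}$; at the bush centre $|F|\sim a\,m(Q)$. Square-root cancellation is only available on average. The paper obtains it from $L^2$ orthogonality plus the $L^4$ refined decoupling of Guth--Iosevich--Ou--Wang, under uniform statistics: $\#\mathcal{X}(T)\sim\lambda R^{1/2}$, multiplicity $\lesssim m$, and $\lambda R^{1/2}\#\T\approx m\#\sQ_X$. This yields the factor $\lambda^{-\frac12+\frac1p}$. Only then does the problem reduce to a stripwise incidence moment. There, Wang--Wu's two-ends Furstenberg estimate, applied after splitting into Katz--Tao families and combined with the Kakeya-type bound, gives $\lambda^{\frac32-\frac p4}\le\lambda^{\frac p2-1}$, precisely for $p\le\frac{10}{3}$.

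Your placeholder ``$|\T|^{?}r^{-3}$'' is exactly this unworked computation. Note also that the paper proves each $p$ directly rather than interpolating from the endpoint; such interpolation is not automatic for these square-function right-hand sides.
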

By Rubio de Francia square-function estimates (for instance, see \cite[Theorem 5.16]{demeter2020fourier}) and the reverse square-function estimate for the parabola (see \cite[Prop. 3.3]{demeter2020fourier}) with Fubini's Theorem (i.e., cylindrical decoupling), we have for $2\le p\le 4$, $$\|\bS_S F\|_{L^p(w_{B_R})}\sim \|F_S\|_{L^p(w_{B_R})}.$$ Hence, Theorem \ref{thm:stripDec} is equivalent to Theorem \ref{thm:strongStripDec}. 

Demeter proved an $\ell^p(L^p)$ variant of Theorem \ref{thm:stripDec} for the range $2\le p\le 3$ \cite[Theorem 3.1]{demeter2026BeyondCanonical} using the trilinear $L^3$ restriction estimate. Indeed, the trilinear $L^3$ restriction estimate shows a trilinear $\ell^2(L^3)$ version of Theorem \ref{thm:strongStripDec} as seen in \cite[Theorem 3.3]{demeter2026BeyondCanonical}. Unfortunately, the broad-narrow argument destroys the $\ell^2$ structure of the narrow term. We must overcome this obstacle in our proof of Theorem \ref{thm:strongStripDec} by avoiding a broad-narrow argument altogether.

These results serve as a special case of the broader program which began in \cite{demeter2026BeyondCanonical} to study the restriction conjecture: for $p\ge 3$, \begin{equation}\label{eq:L^p restriction}
    \|Ef\|_{L^p(B_R)}\lessapprox \|f\|_{\infty},
\end{equation} where $E$ is the extension operator for the paraboloid $\M$ in $\R^3$. For completeness, let us state the $\ell^p$ large-ensemble decoupling conjecture which implies the restriction estimate \eqref{eq:L^p restriction}. We call a union of $M$ canonical caps an \textit{$M$-set}. We call a collection of $M$-sets $\{P\}$ an \textit{$M$-large-ensemble collection} if $\{P\}$ forms a partition of $\M(R^{-1})$. 

\begin{conj}\label{conj:MlargeEnsemble}
    Fix $p\in (3,4]$ and $M\sim R^{\frac{4-p}{p-2}}$. For each $B_R\subset \R^3$, $F: \R^3\to \C$ with $\supp \wh{F}\subset \mathcal{M}(R^{-1})$, each $M$-large-ensemble collection $\{P\}$, we have $$\|F\|_{L^p(B_R)}\lessapprox (R/M)^{\frac{1}{2}-\frac{1}{p}}\left(\sum_{P} \|F_P\|_{L^p(w_{B_R})}^p\right)^{1/p}.$$
\end{conj}
To gauge the expected difficulty of Conjecture \ref{conj:MlargeEnsemble}, note $p=\frac{10}{3}$ and $p=\frac{22}{7}$ correspond to $M\sim R^{1/2}$ and $M\sim R^{3/4}$, respectively. These would imply the restriction estimates of Tao \cite{Tao03} and Wang--Wu \cite{wang2024restrictionestimatesusingdecoupling}. The full restriction problem would follow from Conjecture \ref{conj:MlargeEnsemble} with $M\sim R^{1-o(1)}$.

Let us briefly overview the proof of Theorem \ref{thm:strongStripDec}. Using the square-functions $\bS_S F$, we may express each $$\|\bS_SF\|_{L^p(w_{B_R})}$$ in terms of the incidences of the tubes coming from the canonical wave packet decomposition of $F$. Since the tubes coming from each $\theta\subset S$ can be partitioned in families of coplanar tubes, we may use planar incidence geometry to analyze them. The technical heart of our proof is inspired by Wang--Wu \cite{wang2024restrictionestimatesusingdecoupling}. We follow a two-ends induction-on-scales approach. Using the canonical wave packet decomposition of a function $F$, we first refine the wave packet statistics to make them uniform in various ways and establish "two-endedness" of the tubes with some shading of cubes. The uniformity allows us to gain from the $L^4$ refined decoupling inequality. We interpolate between $L^2$ orthogonality and $L^4$ refined decoupling. This reduces the strip decoupling to a stripwise refined multiplicity estimate Theorem \ref{thm:multMoment}. As mentioned above, the crucial observation here is that the tubes from a strip can be partitioned into coplanar families. Therefore, we may use planar incidence geometry, namely the two-ends Furstenberg estimates.

The outline of this paper is as follows. For the rest of Section \ref{sec:Intro}, we state a connection with small-cap decoupling inequalities and give the bush example to show $\frac{10}{3}$ is optimal for Theorem \ref{thm:stripDec}. In Section \ref{sec:p<3}, we give the author's original arguments for Theorem \ref{thm:strongStripDec} in the range $2\le p\le 3$. This will serve to highlight the key steps of our approach. Sections \ref{sec:p>3} and \ref{sec:StripTubeGeo} will give the proof of Theorem \ref{thm:strongStripDec} in the remaining range $3<p\le \frac{10}{3}$. In particular, Section \ref{sec:p>3} will reduce the strip decoupling inequality to an incidence problem which will be established in Section \ref{sec:StripTubeGeo}.

\subsection{Application to Small-Cap Decoupling}
As an application of our result, we can draw a connection to the recent small-cap decoupling inequalities of Guth--Maldague--Oh \cite{GMO25}. Let's recall a special instance of their result. Let $\{\gamma\}$ be a partition of $\M(R^{-1})$ into small-caps of size $R^{-1/2}\times R^{-1}\times R^{-1}$.
\begin{theorem}\label{thm:GMO}
    \cite{GMO25} For each $F: \R^3\to \C$ with $\supp \wh{F}\subset \mathcal{M}(R^{-1})$ and each $2\le p\le \frac{10}{3}$, we have $$\|F\|_{L^p(\R^3)}\lessapprox (R^{3/2})^{\frac{1}{2}-\frac{1}{p}}\left(\sum_{\gamma} \|F_\gamma\|_{L^p(\R^3)}^p\right)^{1/p}.$$
\end{theorem}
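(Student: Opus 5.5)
The plan is to deduce Theorem \ref{thm:GMO} from Theorem \ref{thm:stripDec} in two decoupling steps. First decouple into strips $S$. Then, inside each strip, decouple into small caps $\gamma$ using the small-cap decoupling for the parabola together with cylindrical decoupling. Since the small caps $\gamma$ are $R^{-1/2}\times R^{-1}\times R^{-1}$, I orient them so that the $R^{-1/2}$ side lies along the short ($\xi_2$) direction of the strips. Each strip $S$ is then essentially a union of about $R$ caps $\gamma$, and there are about $R^{1/2}$ strips.

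\emph{Step 1 (strips).} Fix $B_R$ and $2\le p\le \frac{10}{3}$. Theorem \ref{thm:stripDec} gives
$$\|F\|_{L^p(B_R)}\lessapprox \Bigl(\sum_S\|F_S\|_{L^p(w_{B_R})}^2\Bigr)^{1/2}.$$
Hölder's inequality over the $\sim R^{1/2}$ strips then gives
$$\|F\|_{L^p(B_R)}\lessapprox (R^{1/2})^{\frac12-\frac1p}\Bigl(\sum_S\|F_S\|_{L^p(w_{B_R})}^p\Bigr)^{1/p}.$$

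\emph{Step 2 (inside one strip).} Write $S$ as the $R^{-1}$-neighborhood of $\{(\xi_1,\xi_2,\xi_1^2+\xi_2^2):\xi_2\in I\}$ with $|I|=R^{-1/2}$ and center $c_I$. Apply the affine change of variables $\xi_3\mapsto \xi_3-2c_I(\xi_2-c_I)-c_I^2$. Since $(\xi_2-c_I)^2\le R^{-1}$, this maps $S$ into an $O(R^{-1})$-neighborhood of the cylinder $\{(\xi_1,\xi_2,\xi_1^2):\xi_2\in I\}$. The caps $\gamma\subset S$ become the products of $R^{-1}$-arcs of the parabola $\{(\xi_1,\xi_1^2)\}$ with $I$.

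Freeze $x_2$ and apply Fubini (cylindrical decoupling). The small-cap decoupling for the parabola of Demeter--Guth--Wang, with arcs of length $R^{-\beta}$ in the $R^{-1}$-neighborhood, gives constant $R^{\beta(\frac12-\frac1p)}$ for $2\le p\le 2+\frac2\beta$. We use it with $\beta=1$, which is valid for $p\le 4$ and hence covers $p\le\frac{10}{3}$. This yields
$$\|F_S\|_{L^p(w_{B_R})}\lessapprox R^{\frac12-\frac1p}\Bigl(\sum_{\gamma\subset S}\|F_\gamma\|_{L^p(w_{B_R})}^p\Bigr)^{1/p}.$$
The affine change of variables does not affect $L^p$ norms of the pieces up to modulating phases and harmless changes of weights.

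\emph{Step 3 (combine and globalize).} Inserting Step 2 into Step 1 gives the exponent
$$(R^{1/2})^{\frac12-\frac1p}\cdot R^{\frac12-\frac1p}=(R^{3/2})^{\frac12-\frac1p}$$
on each $B_R$. To pass to $\R^3$, raise to the $p$-th power and sum over a finitely overlapping cover by balls $B_R$, using $\sum_{B_R} w_{B_R}\lesssim 1$.

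The step I expect to need the most care is Step 2. There I must check that the strip, after the shear, is genuinely $R^{-1}$-close to a parabolic cylinder, so that cylindrical decoupling applies with no loss. I must also check that the caps $\gamma$ in Theorem \ref{thm:GMO} can be matched with products of $R^{-1}$-arcs and $I$; if their orientation differs, I would first rotate so that the strips are transverse to their long side. The substantive input is Theorem \ref{thm:stripDec}; once the geometry is checked, the remainder is bookkeeping.
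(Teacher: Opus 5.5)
Your proposal is correct and follows the paper's route. You apply strip decoupling (Theorem \ref{thm:stripDec}), then use Hölder over the $\sim R^{1/2}$ strips, and then apply the $\ell^p(L^p)$ small-cap decoupling for the parabola with $R^{-1}$-caps on each strip (valid since $\frac{10}{3}\le 4$). The only difference from the paper is order: it applies the small-cap step before Hölder, which gives the slightly stronger mixed $\ell^2(\ell^p)$ corollary and from which Theorem \ref{thm:GMO} follows by the same Hölder step.
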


Surprisingly, we can give an alternate proof of Theorem \ref{thm:GMO}; we actually prove something slightly stronger. This suggests a correspondence between large-cap and small-cap decoupling inequalities. 

\begin{corollary}
    For each $B_R\subset \R^3$, $F: \R^3\to \C$ with $\supp \wh{F}\subset \mathcal{M}(R^{-1})$ and each $2\le p\le \frac{10}{3}$, we have $$\|F\|_{L^p(B_R)}\lessapprox R^{\frac{1}{2}-\frac{1}{p}} \left(\sum_{S} \left(\sum_{\gamma\subset S}\|F_\gamma\|_{L^p(w_{B_R})}^p\right)^{\frac{2}{p}}\right)^{1/2}.$$
\end{corollary}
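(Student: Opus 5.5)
The corollary should follow by composing Theorem \ref{thm:stripDec} with a small-cap decoupling estimate inside each strip. Apply Theorem \ref{thm:stripDec} on $B_R$ to get
\[
\|F\|_{L^p(B_R)}\lessapprox \Bigl(\sum_S \|F_S\|_{L^p(w_{B_R})}^2\Bigr)^{1/2}.
\]
It then suffices to show, for each strip $S$ and $2\le p\le \frac{10}{3}$,
\begin{equation*}
\|F_S\|_{L^p(w_{B_R})}\lessapprox R^{\frac12-\frac1p}\Bigl(\sum_{\gamma\subset S}\|F_\gamma\|_{L^p(w_{B_R})}^p\Bigr)^{1/p}.
\end{equation*}
This is a small-cap decoupling for a single strip. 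Each strip contains $R^{1/2}\cdot R^{1/2}=R$ small caps $\gamma$: there are $R^{1/2}$ canonical caps $\theta\subset S$, and each $\theta$ splits into $R^{1/2}$ small caps. The exponent $R^{\frac12-\frac1p}$ is exactly the trivial Hölder loss for $R$ pieces, which suggests the right tool is an $L^2$ estimate interpolated with a trivial $L^\infty$ bound.

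\textbf{Step 1: the strip is essentially a cylinder over a parabola.} With $\xi_2\in I$ fixed to within $R^{-1/2}$, write $\xi_2=c+O(R^{-1/2})$. Then $\xi_1^2+\xi_2^2=\xi_1^2+c^2+2c(\xi_2-c)+O(R^{-1})$, so after an affine change of variables $S$ is contained in $\{(\xi_1,\eta,\xi_1^2+\eta'): |\eta|\lesssim R^{-1/2},\ |\eta'|\lesssim R^{-1}\}$. This is the $R^{-1}$-neighborhood of a parabola in the $(\xi_1,\xi_3)$ plane, thickened by $R^{-1/2}$ in the $\eta$ direction. The small caps $\gamma\subset S$ become $R^{-1/2}\times R^{-1}\times R^{-1}$ boxes, elongated along the $\xi_1$ direction of the parabola or along $\eta$. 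Since $\gamma$ is short in $\eta$, I will check this orientation against the statement; most likely the long side is tangential to the parabola, and each $\theta$ is cut into $R^{1/2}$ slabs of thickness $R^{-1}$ in $\eta$.

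\textbf{Step 2: $L^2$ and $L^\infty$ endpoints.} The $\gamma$ are finitely overlapping in frequency, so the weighted-ball version of Plancherel gives
\[
\|F_S\|_{L^2(w_{B_R})}\lesssim \Bigl(\sum_\gamma\|F_\gamma\|_{L^2(w_{B_R})}^2\Bigr)^{1/2}.
\]
Trivially, $\|F_S\|_\infty\le \sum_\gamma\|F_\gamma\|_\infty\le R^{1/2}\cdot R^{1/2}\max_\gamma\|F_\gamma\|_\infty$. A direct $\ell^p$ interpolation between $\ell^2(L^2)$ and $\ell^1(L^\infty)$ at the level of functions, via the standard wave-packet pigeonholing, gives $\ell^p(L^p)$ decoupling with constant $(\#\gamma)^{\frac12-\frac1p}=R^{\frac12-\frac1p}$. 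The argument runs as follows:
\begin{enumerate}
\item decompose each $F_\gamma$ into wave packets on its dual $R^{1/2}\times R\times R$ slabs;
\item pigeonhole so that all packets have comparable amplitude;
\item use the $L^2$ bound together with the fact that at each point at most $\#\gamma$ packets contribute.
\end{enumerate}
This is the main technical step, but it is standard, as in flat/trivial decoupling arguments for $\ell^p$ with many pieces. Alternatively, Hölder gives $\ell^2(L^p)\to \ell^p(L^p)$ with $R^{\frac12-\frac1p}$, and $\ell^2(L^p)$ decoupling into $\gamma$ holds for $p\ge2$ when the $\gamma$ are ``flat'' inside $S$. I need to check whether the cylindrical parabola allows this: $\ell^2$ decoupling for a parabola at $p\le 6$ is lossless into canonical $R^{-1/2}$ arcs, and here the $\gamma$ are coarser than or comparable to those arcs in $\xi_1$. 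So Bourgain--Demeter for the parabola, combined with Fubini in $\eta$ and a trivial $L^2$-based split of $\eta$ into $R^{1/2}$ pieces (which costs nothing in $\ell^2(L^p)$ via a locally constant argument only if $p=2$), would be needed.

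\textbf{Step 3: assemble.} To keep it clean, I will use the first route: the $\ell^p$ interpolation with constant $(\#\gamma)^{\frac12-\frac1p}$. Plugging the per-strip estimate into the $\ell^2$ sum over $S$ yields the corollary, with weights handled by standard $w_{B_R}$ localization. Finally, applying Hölder in $S$ over $R^{1/2}$ strips and summing over balls recovers Theorem \ref{thm:GMO}, since $R^{\frac12-\frac1p}(R^{1/2})^{\frac12-\frac1p}=(R^{3/2})^{\frac12-\frac1p}$.
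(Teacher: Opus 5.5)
Your reduction matches the paper's: apply Theorem \ref{thm:stripDec}, then prove on each strip that $\|F_S\|_{L^p(w_{B_R})}\lessapprox R^{\frac12-\frac1p}(\sum_{\gamma\subset S}\|F_\gamma\|_{L^p(w_{B_R})}^p)^{1/p}$. But that per-strip estimate carries all the content, and Step 2 does not prove it. Take packets of amplitude $a$ and use Plancherel together with your pointwise count. This gives $\|F_S\|_p^p\le \|F_S\|_\infty^{p-2}\|F_S\|_2^2\lesssim (Ra)^{p-2}\sum_\gamma\|F_\gamma\|_2^2\sim R^{p-2}\sum_\gamma\|F_\gamma\|_p^p$, i.e.\ the constant $R^{1-\frac2p}$. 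Equivalently, interpolating $\ell^2(L^2)$ with $\ell^1(L^\infty)$ gives $\ell^{p'}(L^p)$ with constant $1$, hence $\ell^p(L^p)$ with $(\#\gamma)^{1-\frac2p}$. That is the square of what you need. The exponent $\frac12-\frac1p$ is the H\"older loss after a lossless $\ell^2$ decoupling, i.e.\ square-root cancellation. It cannot come from orthogonality and the triangle inequality alone: for a flat partition into $N$ translates, $N^{1-\frac2p}$ is sharp (Knapp). Your alternative route fails for a related reason, as you half-noticed. Each canonical $\theta$ is split into $R^{1/2}$ flat pieces, so no lossless $\ell^2$ decoupling into small caps exists.

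Second, the orientation you settle on makes the per-strip inequality false. If $\gamma$ has its $R^{-1/2}$ side along $\xi_1$, then after your shear $\gamma=\theta_{\mathcal P}\times J$, with $\theta_{\mathcal P}$ a canonical arc of the parabola and $J$ an $R^{-1}$-interval in $\eta$. For $F=G(x_1,x_3)H(x_2)$ the decoupling ratios multiply. Take $G$ the random-sign example with every $|G_{\theta_{\mathcal P}}|\sim 1$ on the $R$-ball of the $(x_1,x_3)$-plane; its ratio is $(R^{1/2})^{\frac12-\frac1p}$. Take $\wh H$ a bump on the $R^{-1/2}$-interval (Knapp); its ratio is $(R^{1/2})^{1-\frac2p}$. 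The product gives $R^{\frac32(\frac12-\frac1p)}$, which exceeds $R^{\frac12-\frac1p}$.

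The caps must instead have length $R^{-1}$ in $\xi_1$ and span the full width $R^{-1/2}$ of $S$. Then $\gamma=\gamma_{\mathcal P}\times[-R^{-1/2},R^{-1/2}]$, with $\gamma_{\mathcal P}$ an $R^{-1}\times R^{-1}$ small cap of the parabola. The missing ingredient is the curvature-dependent $\ell^p(L^p)$ small cap decoupling for the parabola at $\beta=1$ (Demeter--Guth--Wang). It holds with constant $R^{\frac12-\frac1p}$ precisely for $2\le p\le 4$. Apply it on each slice $x_2=\mathrm{const}$ and integrate in $x_2$. This is the paper's argument, and it is why the condition $\frac{10}{3}\le 4$ appears.
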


 \begin{proof}
     Apply Theorem \ref{thm:stripDec}. Then, since $\frac{10}{3}\le 4$, we can apply the $\ell^p(L^p)$ small-cap decoupling for the parabola (see \cite[Theorem 2.3]{demeter2020smallcapdecouplings}) on each strip.
 \end{proof}

 Further, as noted in \cite{GMO25}, Theorem \ref{thm:GMO} implies the restriction estimate \eqref{eq:L^p restriction} for $p= \frac{10}{3}$. Hence, Theorem \ref{thm:stripDec} is alone strong enough for the restriction estimate at $p=\frac{10}{3}$, without needing the full strength of Conjecture \ref{conj:MlargeEnsemble} with $M\sim R^{1/2}$.

\subsection{Sharp Example for Theorem \ref{thm:stripDec}}\label{subsect:Examples}

Demeter used this bush example in \cite{demeter2026BeyondCanonical} to show $\frac{10}{3}$ is optimal for the $\ell^p(L^p)$ strip decoupling. For the sake of the reader, we give this example here and show it works as an example for Theorem \ref{thm:stripDec}. Let $$F=\sum_\theta F_\theta$$ with each $F_\theta$ a single wave packet containing the origin, with magnitude $\sim 1$. So, $|F_\theta| \approx 1_{T_\theta}$. By constructive interference, $$\|F\|_{L^p(B(0,R))}\gtrsim R.$$ Alternatively, by the bush estimate for tubes (see \cite[Exercise 5.9]{demeter2020fourier}), $$\|\bS_SF\|_{L^p(B(0,R))}\lessapprox \|\sum_{\theta \subset S}1_{T_\theta}\|_{p/2}^{1/2}\sim R^{\frac{5}{2p}}.$$ Thus, Theorem \ref{thm:stripDec} can only hold if $$R^p\lessapprox (R^{1/2})^{\frac{p}{2}}R^{\frac{5}{2}} \iff p\le \frac{10}{3}.$$ 

\subsection{Notation}
Throughout this paper, we will use the following notation for inequalities with implicit constants. We write $A\lesssim B$ if there is some $c>0$, independent of $R$, so that $A\le c B$ and we write $A\sim B$ if $A\lesssim B$ and $B\lesssim A$. Further, we write $A\lessapprox B$ if for arbitrarily small $\eta>0$, there is some constant $C_\eta>0$ so that $$A\le C_\eta R^{\eta} B.$$ For instance, $A\lesssim (\log R)^{O(1)}$ can be written as $A\lessapprox 1$. We also let $w_{B_R}$ be a smooth weight such that $w_{B_R}\sim 1$ on $B_R$ and rapidly decays away from $B_R$.

\medskip
\noindent
{\it Acknowledgments}. The author would like to thank his advisor, Ciprian Demeter, for introducing him to this problem and for discussions throughout the completion of this work. The author would also like to thank Shukun Wu for some insightful and enriching remarks early in the development of this work.

\medskip
\noindent
{\it Use of AI declaration}. During the development of this work, artificial intelligence had a varied role. Initially, the author proved Theorem \ref{thm:stripDec} in the range $2\le p\le 3$ and the $\ell^p(L^p)$ strip decoupling inequality for the full range $2\le p\le \frac{10}{3}$ without the aid of artificial intelligence. ChatGPT 5.6 Sol was asked to extend these methods to prove Theorem \ref{thm:stripDec} in the range $3<p\le \frac{10}{3}$, naturally using the stronger two-ends incidence estimates. While having technical gaps and many obtuse arguments, it provided the key ideas for Subsection \ref{subsec:NonTwoEnds} and Section \ref{sec:StripTubeGeo}. This note was solely written and edited by the author. ChatGPT 5.6 and 6 Sol were also used to create the Figures throughout; these were originally hand-drawn and ChatGPT was asked to create the TikZ code from the drawing. All mathematical claims and verification remain the work and responsibility of the author.

\section{Proof of Theorem \ref{thm:strongStripDec} for $p\le 3$}\label{sec:p<3}
First, to explain the ideas, we will give a proof of our main theorem for $2\le p\le 3$. Fix $B_R\subset \R^3$, $F$ with $\supp \wh{F}\subset \mathcal{M}(R^{-1})$, and $2\le p\le 3$.

\subsection{Wave Packet Decomposition and Pigeonholing}\label{subsect:wavepackets3}

Recall the wave packet decomposition of $F$ (see \cite{demeter2020fourier}): for each canonical cap $\theta$, $$F_\theta = \sum_{T\in \T_\theta} F_T,$$ where each $T$ is a tube of dimensions $R^{1/2}\times R^{1/2}\times R$ dual to $\theta$ and $|F_T|$ is essentially constant and essentially supported on $T$. Let $\T = \bigcup_\theta \T_\theta$ and for each strip $S$, let $\T_S= \bigcup_{\theta\subset S}\T_\theta$. By standard pigeonholing, we may assume $\|F_T\|_\infty \sim a$ for each $T\in \T$. 

Moreover, by the following lemma, we may assume $F$ has other uniform "statistics." We are aiming to reduce $B_R$ to a collection of $R^{1/2}$-cubes $\sQ=\{Q\}$ and the tubes to a subcollection $\T^{'}$ where both $$\#\T^{'}(Q)=\#\{T\in \T^{'}: Q\subset T\}\sim m, $$ and $$ \#\sQ(T)=\#\{Q\in \sQ : Q\subset T \}\sim \lambda R^{1/2}.$$ This would allow us to apply $L^4$ refined decoupling and gain by a power of $\lambda$ with $L^2$. We cannot literally reduce to this configuration, but we can reduce to a sufficiently close configuration. When applying this lemma, we may take $\eta$ sufficiently small so that we only get $\lessapprox 1$ losses.

\begin{lemma}\label{label:uniformStats}
   Let $\eta>0$. There exist $X\subset B_R$, a union of $R^{1/2}$-cubes $\sQ_X$ and a subset of tubes $\T^{'}\subset \T$ such that $$\|F\|_{L^p(B_R)}\lessapprox \|\tilde{F}\|_{L^p(X)}$$ where $\tilde{F} = \sum_{T\in \T^{'}}F_T$ and such that \begin{enumerate}
        \item for each $T\in \T^{'}$, $\#\sQ_X(T)\sim \lambda R^{1/2}$ for some dyadic $R^{-1/2}\lesssim \lambda \lesssim 1$,

        \item for each $Q\in \sQ_X$, $\#\T^{'}(Q)\lesssim m$,

        \item $R^{-\eta}\#\sQ_X m\lesssim\lambda R^{1/2}\#\T^{'}\lesssim \#\sQ_X m$
    \end{enumerate}
\end{lemma}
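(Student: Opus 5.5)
We prove Lemma \ref{label:uniformStats} by a sequence of dyadic pigeonholing steps, iterated a bounded number of times so that the accumulated losses are only $\lessapprox 1$. The idea is to make the statistics $\#\T'(Q)$ and $\#\sQ(T)$ uniform while keeping a large portion of $\|F\|_{L^p(B_R)}$.

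\emph{Step 1: cover and first pigeonhole.} Cover $B_R$ by a partition into $R^{1/2}$-cubes $Q$. Since $\|F_T\|_\infty\sim a$ for all $T\in\T$ and $|F_T|$ is essentially supported on $T$, on each $Q$ we have $F\approx\sum_{T\in\T(Q)}F_T$ up to rapidly decaying tails, which can be discarded. Pigeonhole the dyadic value $m$ of $\#\T(Q)$: there are $O(\log R)$ choices, so some class $\sQ_m=\{Q:\#\T(Q)\sim m\}$ satisfies $\|F\|_{L^p(B_R)}\lessapprox\|F\|_{L^p(\cup\sQ_m)}$.

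\emph{Step 2: pigeonhole on tubes.} For each $T$, count $\#\{Q\in\sQ_m: Q\subset T\}$, which is at most $R^{1/2}$. Pigeonhole a dyadic $\lambda\in[R^{-1/2},1]$ and let $\T_\lambda$ be the tubes with count $\sim\lambda R^{1/2}$. Decompose $F=\sum_\lambda F^{(\lambda)}$ on $\cup\sQ_m$, where $F^{(\lambda)}=\sum_{T\in\T_\lambda}F_T$. Since there are $O(\log R)$ values of $\lambda$, the triangle inequality gives a $\lambda$ with $\|F\|_{L^p(\cup\sQ_m)}\lessapprox\|F^{(\lambda)}\|_{L^p(\cup\sQ_m)}$. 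Set $\T'=\T_\lambda$ and $\tilde F=F^{(\lambda)}$. Then property (1) holds with respect to $\sQ_m$, and property (2) holds because $\#\T'(Q)\le\#\T(Q)\lesssim m$.

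\emph{Step 3: enforce (3) by iteration.} Double counting gives $\lambda R^{1/2}\#\T'\sim\sum_{Q\in\sQ_m}\#\T'(Q)\lesssim m\#\sQ_m$, which is the upper bound in (3). The lower bound can fail because $\#\T'(Q)$ may be much smaller than $m$ on many cubes. To fix this, pigeonhole again: choose a dyadic $m'$ and the cubes $\sQ'=\{Q\in\sQ_m:\#\T'(Q)\sim m'\}$ carrying most of $\|\tilde F\|_{L^p}$. Then restart with $m'$ in place of $m$ and $\T'$ in place of $\T$.

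This iteration is the main obstacle. Restricting the cubes lowers the tube counts $\#\sQ(T)$, so $\lambda$ must be re-pigeonholed, which may in turn lower the cube counts again. We control this with a monotonicity argument: at each round either (3) holds up to a factor $R^{-\eta}$, or the product $\lambda m$ (equivalently the dyadic parameters) drops by a factor $R^{\eta}$. Since $m\le R^{O(1)}$ and $\lambda\ge R^{-1/2}$, the loop terminates after $O(1/\eta)$ rounds. Each round costs a $\log R$ factor, for a total loss of $(\log R)^{O(1/\eta)}\lessapprox1$. At termination, take $X=\cup\sQ_X$ with $\sQ_X$ the final cube set. Then (1) and (2) hold, and (3) holds with the $R^{-\eta}$ lower bound.

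One point requires care: removing tubes changes $F$ on the remaining cubes. This is handled by always comparing with $\tilde F$ from the previous round, using $\|\tilde F\|_{L^p(X)}\lessapprox\|\tilde F_{\text{new}}\|_{L^p(X_{\text{new}})}$ via the same triangle-inequality pigeonhole over dyadic classes.
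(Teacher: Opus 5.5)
Your scheme (alternating a cube pigeonhole for $m$ with a tube pigeonhole for $\lambda$, stopping once (3) holds, and paying a $\log R$ factor per round) is the paper's. The gap is the termination argument in Step 3: nothing forces $\lambda m$ to drop by $R^{\eta}$ when (3) fails.

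Failure of (3) only says that the \emph{average} of $\#\T'(Q)$ over $Q\in\sQ$ is at most $R^{-\eta}m$. The next cube pigeonhole is driven by the $L^p$ mass of $\tilde F$, not by counts. It may select a sparse set $\sQ'$ on which $\#\T'(Q)\sim m$ still, so $m'\sim m$. This is even typical, since $|\tilde F|$ is largest where the multiplicity is largest. The tube pigeonhole may then select tubes with essentially all of their $\sim\lambda R^{1/2}$ cubes inside $\sQ'$, so $\lambda'\sim\lambda$. Suppose these tubes are a minority at each $Q\in\sQ'$ but dominate $\tilde F$ there, e.g.\ through constructive interference. Then (3) fails again while $\lambda m$ has not moved; only the numbers of cubes and tubes have decreased. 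So the bounds $m\le R^{O(1)}$ and $\lambda\ge R^{-1/2}$ give no bound on the number of rounds.

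The quantity that does decrease, and the one the paper tracks, is the incidence count $I=\sum_{Q\in\sQ}\#\T'(Q)=\sum_{T\in\T'}\#\sQ(T)\sim\lambda R^{1/2}\#\T'$. Suppose the new round $(\sQ',\T'',m',\lambda')$ fails (3). Since $m'$ is the uniform multiplicity of the \emph{old} family $\T'$ on $\sQ'\subset\sQ$,
\[
I_{\mathrm{new}}\sim\lambda' R^{1/2}\#\T''\lesssim R^{-\eta}\#\sQ'\, m'\sim R^{-\eta}\sum_{Q\in\sQ'}\#\T'(Q)\le R^{-\eta}\sum_{Q\in\sQ}\#\T'(Q)= R^{-\eta}I_{\mathrm{old}}.
\]
Since $1\lesssim I\lesssim R^{O(1)}$, at most $O(\eta^{-1})$ rounds occur, giving a total loss of $(\log R)^{O(\eta^{-1})}\lessapprox 1$. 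With this monovariant in place of $\lambda m$, the rest of your argument goes through.
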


\begin{proof}
    We will follow an iterative process. We will pigeonhole to create (1) and (2). If (3) is satisfied, we may stop. If (3) is not satisfied, then we have lost a significant portion of the cube-tube incidences and we can repeat. We must stop after only $O(\eta^{-1})$ iterations since we would run out of cube-tube incidences. 
    
    \textbf{Initial step}: let $X_0=B_R$, $\T_0=\T$, $\lambda_0= 1$, $m_0=R$ and $F_0= \sum_{T\in \T_0}F_T$. Note $$ \lambda_0R^{1/2}\#\T_0 \lesssim \#\sQ_{X_0}m_0.$$ If also we have\begin{equation}\label{eqn:stoppingCondition}
         \#\sQ_{X_0}m_0\lesssim R^{\eta}\lambda_0R^{1/2}\#\T_0,
    \end{equation} we are done by taking $X=X_0$, $\T^{'}=\T_0$, $\lambda=\lambda_0$, and $m=m_0$. If not, we can repeat the pigeonholing process.
    
    \textbf{Inductive step}: we will now construct $(X_{j+1}, \T_{j+1})$ given $(X_j, \T_j)$.
    Pigeonhole the cubes to create $X_{j+1}\subset X_j$ such that for each $Q\subset X_{j+1}$, we have $\#\T_{j}(Q)\sim m_{j+1}$ for some dyadic $m_{j+1}$ and $$\|F_{j}\|_{L^p(X_{j})}^p\lesssim (\log R)\|F_{j}\|_{L^p(X_{j+1})}^p.$$ Now, pigeonhole the tubes to create $\T_{j+1}\subset \T_j$ such that for each $T\in \T_{j+1}$, we have $\#\sQ_{X_{j+1}}(T) \sim \lambda_{j+1}R^{1/2}$ for some dyadic $\lambda_{j+1}$ and $$\|F_j\|_{L^p(X_{j+1})}\lesssim (\log R)\|F_{j+1}\|_{L^p(X_{j+1})}.$$ As in the initial step, we have $$\lambda_{j+1}R^{1/2}\#\T_{j+1}\sim I(\sQ_{X_{j+1}}, \T_{j+1})\le I(\sQ_{X_{j+1}}, \T_{j})\sim \#\sQ_{X_{j+1}}m_{j+1}.$$ If also we have \begin{equation}\label{eqn:stoppingConditionGenereal}
          \#\sQ_{X_{j+1}}m_{j+1}\lesssim R^{\eta}\lambda_{j+1}R^{1/2}\#\T_{j+1},
    \end{equation} we may stop with $X=X_{j+1}$, $\T^{'}=\T_{j+1}$, $\lambda=\lambda_{j+1}$, and $m=m_{j+1}$. 
    
    \textbf{Termination}: every iteration which does not terminate the process satisfies $$\lambda_{j+1}R^{1/2}\#\T_{j+1} \lesssim R^{-\eta} \#\sQ_{X_{j+1}}m_{j+1} \sim I(\sQ_{X_{j+1}}, \T_{j})\le I(\sQ_{X_j}, \T_j) \sim \lambda_j R^{1/2}\#\T_j.$$ Therefore, if we iterate $K$ times, $$1\lesssim \lambda_KR^{1/2}\#\T_K\lesssim R^{-K\eta}\lambda_0R^{1/2}\#\T_0\lesssim R^{O(1)-K\eta}.$$ Hence, we need only iterate at most $K\lesssim \eta^{-1}$ times before terminating. Since each iteration gives a factor of $(\log R)^{1+\frac{1}{p}}\lesssim (\log R)^2 $, we get $$\|F\|_{L^p(B_R)}\lesssim (\log R)^{O(\eta^{-1})}\|\tilde{F}\|_{L^p(X)}.$$ We may use the fact that $\log R\lesssim_\epsilon R^{\epsilon}$ and take $\epsilon$ sufficiently small compared to $\eta$.
\end{proof}

We will abuse our notation and write $\T$ and $F$ for $\T^{'}$ and $\tilde{F}$, respectively, for the rest of this section. For each $Q\subset B_R$ and each strip $S$, let $$m(Q) = \#\{T\in \T : Q \text{ intersects } T\},$$ and  $$m_S(Q)=\#\{T \in \T_S : Q \text{ intersects } T\}.$$ We wanted these uniform statistics so we can apply the refined decoupling theorem proved by Guth--Iosevich--Ou--Wang in \cite{GIOWrefinedDecoupling}.

\begin{theorem}\label{thm:refinedDecoupling}
    Suppose $F$ is a sum of wave packets $$F=\sum_{T\in \T}F_T$$ and $X\subset B_R$ is a union of $R^{1/2}$-cubes such that for each $Q\subset Y$, $m(Q)\lesssim m$. Then, $$\|F\|_{L^4(Y)}\lessapprox m^{\frac{1}{4}}\left(\sum_T \|F_T\|_{L^4(B_R)}^4\right)^{1/4}.$$
\end{theorem}
\subsection{$L^p$ estimates}
By H\"{o}lder's inequality, we will interpolate between $L^2$ and $L^4$. We will then apply Minkowski's inequality to isolate each strip. 
\begin{prop}\label{prop:LpLHS3}
$$\|F\|_{L^p(X)}\lessapprox aR^{\frac{3}{2p}}\lambda^{-\frac{1}{2}+\frac{1}{p}}\left(\sum_{S}(\sum_{Q\in \sQ_X}m_S(Q)^{p/2})^{2/p}\right)^{1/2}.$$    
\end{prop}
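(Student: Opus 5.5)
The plan is to interpolate by H\"older between an $L^2$ estimate and the $L^4$ refined decoupling estimate (Theorem \ref{thm:refinedDecoupling}) on $X$. Then the cube--tube incidence counts produced by Lemma \ref{label:uniformStats} convert tube counts into the cube quantities $m_S(Q)$, keeping the sum over strips separate. I have not closed the $\ell^2$-over-strips structure with the correct power of $\lambda$; that step is the main obstacle and is described at the end.

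\textbf{Interpolation.} Write $\frac1p=\frac{1-\vartheta}{2}+\frac{\vartheta}{4}$, so $\vartheta=2-\frac4p\in[0,\frac23]$, and use
\[
\|F\|_{L^p(X)}\le \|F\|_{L^2(X)}^{1-\vartheta}\|F\|_{L^4(X)}^{\vartheta}.
\]
For the $L^2$ factor, use orthogonality of wave packets. Since $\|F_T\|_\infty\sim a$ and $|T|\sim R^2$, the $L^2$ orthogonality of the $\theta$-pieces and of the wave packets inside each $\theta$ gives
\[
\|F\|_{L^2(X)}^2\lesssim \sum_S\|F_S\|_{L^2(w_{B_R})}^2\sim a^2R^2\sum_S\#\T_S .
\]
For the $L^4$ factor, property (2) of Lemma \ref{label:uniformStats} gives $m(Q)\lesssim m$ on $X$, so Theorem \ref{thm:refinedDecoupling} yields
\[
\|F\|_{L^4(X)}^4\lessapprox m\,a^4R^2\,\#\T .
\]
Multiplying the two bounds, the exponents of $a$ and $R^2\#\T$ combine to give $\|F\|_{L^p(X)}^p\lessapprox a^pR^2\,\#\T\,m^{\frac p2-1}$.

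\textbf{Converting tube counts to cube counts.} Property (1) gives $\#\sQ_X(T)\sim\lambda R^{1/2}$ for every $T$, so counting incidences strip by strip,
\[
\lambda R^{1/2}\#\T_S\sim\sum_{Q\in\sQ_X}m_S(Q).
\]
Property (3) says $\#\sQ_X\,m\lessapprox \lambda R^{1/2}\#\T=\sum_Q m(Q)$, and $m(Q)\lesssim m$ pointwise. Hence $m(Q)\approx m$ on a large portion of the incidences. Reverse H\"older then gives
\[
m^{\frac p2-1}\sum_Q m(Q)\lessapprox\sum_Q m(Q)^{p/2},
\]
which trades the factor $m^{p/2-1}$ for a $p/2$-moment. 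The same argument on each strip, using H\"older $\sum_Q m_S(Q)\le(\#\sQ_X)^{1-2/p}\big(\sum_Q m_S(Q)^{p/2}\big)^{2/p}$ together with $\#\sQ_X\approx(\lambda R^{1/2}\#\T)/m$, turns each $\#\T_S$ into $(\lambda R^{1/2})^{-1}$ times a power of $\#\sQ_X$ times $\big(\sum_Q m_S(Q)^{p/2}\big)^{2/p}$.

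\textbf{Where the power of $R$ comes from.} The factor $R^{3/2}$ arises from $R^2\cdot(R^{1/2})^{-1}$, where the $(R^{1/2})^{-1}$ comes from dividing by $\lambda R^{1/2}$ in the incidence identity above.

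\textbf{Main obstacle.} The difficulty is bookkeeping: the per-strip structure must survive as an $\ell^2$ sum, with exactly $\lambda^{-\frac12+\frac1p}$ in front. A crude bound that uses Minkowski in $\ell^{p/2}$ only gives $\big(\sum_S(\sum_Q m_S^{p/2})^{2/p}\big)^{p/2}$, an $\ell^1$ sum over $S$, and an extra power of $\lambda^{-1}$. My first attempt would be to place the $\ell^2$-over-$S$ structure entirely in the $L^2$ factor, writing $\|F\|_2^2=\sum_S\|F_S\|_2^2$ with each term converted as above. The $L^4$ factor would contribute only through $m\approx \lambda R^{1/2}\#\T/\#\sQ_X$. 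I would then track the exponents so that the leftover $\lambda$-power is $\lambda^{-\vartheta p/4}=\lambda^{1-p/2}$, matching $\lambda^{(-\frac12+\frac1p)p}$. The final step would be to recombine with H\"older in $S$, which is where I expect the argument to be most delicate.
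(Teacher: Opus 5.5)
\textbf{There is a genuine gap, but it is not in the strip bookkeeping.} The loss comes from your $L^2$ factor. The bound $\|F\|_{L^2(X)}^2\lesssim a^2R^2\#\T$ is global orthogonality on $B_R$. It discards the fact that each tube meets only $\sim\lambda R^{1/2}$ of its $R^{1/2}$-cubes inside $X$. Tracking your own exponents, you obtain
\[
\|F\|_{L^p(X)}^p\lessapprox a^pR^2\#\T\, m^{\frac p2-1}\approx a^pR^{3/2}\lambda^{-1}\sum_{Q\in\sQ_X}m(Q)^{p/2},
\]
whereas the statement needs $\lambda^{1-\frac p2}$ in place of $\lambda^{-1}$. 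The discrepancy $\lambda^{-(2-\frac p2)}$ cannot be recovered by any rearrangement over strips: if all tubes lie in one strip, the two strip structures coincide.

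\textbf{The missing step is local orthogonality at scale $R^{1/2}$, as in the paper.} On each $Q\in\sQ_X$ the pieces $F_\theta$ are orthogonal, and only $O(1)$ packets from each $\theta$ meet $Q$. Hence $\|F\|_{L^2(Q)}^2\lesssim\sum_T\|F_T\|_{L^2(w_Q)}^2\sim a^2R^{3/2}m(Q)$, and so
\[
\|F\|_{L^2(X)}^2\lesssim a^2R^{3/2}\sum_{Q\in\sQ_X}m(Q)\lesssim a^2R^{3/2}\,m\,\#\sQ_X ,
\]
which is $\sim\lambda\, a^2R^2\#\T$, a gain of $\lambda$ over your bound. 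Rewrite the $L^4$ bound via property (3) as $\lambda^{-1}m^2\#\sQ_X a^4R^{3/2}$ and interpolate. This gives
\[
\|F\|_{L^p(X)}\lessapprox aR^{\frac{3}{2p}}\lambda^{-\frac12+\frac1p}m^{1/2}(\#\sQ_X)^{1/p}.
\]

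\textbf{Your stated ``main obstacle'' comes from misreading the target.} Raised to the $p$-th power, the right-hand side of the proposition is
\[
a^pR^{3/2}\lambda^{1-\frac p2}\Big(\sum_S\big(\sum_{Q\in\sQ_X}m_S(Q)^{p/2}\big)^{2/p}\Big)^{p/2}.
\]
This is exactly the $\ell^1$-in-$S$ sum of $\ell^{p/2}$-norms that Minkowski produces, which you dismissed as crude. The $\ell^2$-over-strips structure of Theorem \ref{thm:strongStripDec} appears only after pairing with Lemma \ref{lem:LpRHS}, where $\|\bS_SF\|_{L^p}^2\gtrsim a^2R^{3/p}\big(\sum_Q m_S(Q)^{p/2}\big)^{2/p}$.

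Once the $L^2$ factor is fixed, the rest of your outline closes the proof. Your H\"older step together with property (3) gives
\[
m^{p/2}\#\sQ_X\lessapprox\big(\sum_{Q\in\sQ_X}m(Q)\big)^{p/2}(\#\sQ_X)^{1-\frac p2}\le\sum_{Q\in\sQ_X}m(Q)^{p/2}.
\]
Then Minkowski in $\ell^{p/2}$, applied to $m=\sum_S m_S$, finishes. Moving the strip decomposition into the $L^2$ factor via $\|F\|_2^2=\sum_S\|F_S\|_2^2$ is neither needed nor helpful, since the $\lambda$-loss has nothing to do with strips.
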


\begin{proof}
    From Lemma \ref{label:uniformStats}, $$\lambda R^{1/2}\#\T\approx m\#\sQ_X.$$ By H\"{o}lder's inequality, $$\|F\|_{L^p(X)}\le \|F\|_{L^2(X)}^{\frac{4}{p}-1}\|F\|_{L^4(X)}^{2-\frac{4}{p}}.$$
By $L^2$ orthogonality and $L^4$ refined decoupling Theorem \ref{thm:refinedDecoupling}, $$\|F\|_{L^2(X)}^2 = \sum_{Q\in \sQ_X}\|F\|_{L^2(Q)}^2\lesssim \sum_{Q\in \sQ_X}\sum_{T}\|F_T\|_{L^2(Q)}^2\lesssim \#\sQ_X m a^2R^{3/2},$$
and $$\|F\|_{L^4(X)}^4 \lessapprox m\sum_{T}\|F_T\|_{L^4(B_R)}^4\sim m \#\T a^4R^{2}\lesssim\lambda^{-1}m^2\#\sQ_X a^4R^{3/2}.$$ Therefore, we get \begin{align*}
    \|F\|_{L^p(X)}&\lessapprox (\#\sQ_X m a^2R^{3/2})^{\frac{1}{2}(\frac{4}{p}-1)}(\lambda^{-1}m^2\#\sQ_X a^4R^{3/2})^{\frac{1}{4}(2-\frac{4}{p})}\\
    &= \lambda^{-\frac{1}{2}+\frac{1}{p}}m^{\frac{1}{2}}(\#\sQ_X)^{\frac{1}{p}}aR^{\frac{3}{2p}}.
\end{align*}
By H\"{o}lder's and Minkowski's inequalities, \begin{multline*}
    m^{1/2}(\#\sQ_X)^{1/p} \lessapprox (\lambda R^{1/2}\#\T)^{1/2} (\#\sQ_X)^{\frac{1}{p}-\frac{1}{2}}\sim \frac{1}{(\#\sQ_X)^{\frac{1}{2}-\frac{1}{p}}}\left(\sum_{Q\in \sQ_X}m(Q)\right)^{1/2}\\
    \lesssim \left(\sum_{Q\in \sQ_X}m(Q)^{p/2}\right)^{1/p}
    \le \left(\sum_{S}(\sum_{Q\in \sQ_X}m_S(Q)^{p/2})^{2/p}\right)^{1/2}.
\end{multline*}
\end{proof}
 
\begin{lemma}\label{lem:LpRHS} For each strip $S$,
    $$\left\|\bS_SF\right\|_{L^p(B_R)} \gtrsim R^{\frac{3}{2p}}a\left(\sum_{Q\subset B_R}m_S(Q)^{\frac{p}{2}}\right)^{\frac{1}{p}}.$$
\end{lemma}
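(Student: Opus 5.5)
We prove the lower bound cube by cube. Since the cubes $Q\subset B_R$ of side $R^{1/2}$ tile $B_R$, we have
$$\|\bS_SF\|_{L^p(B_R)}^p=\sum_{Q\subset B_R}\int_Q\Big(\sum_{\theta\subset S}|F_\theta|^2\Big)^{p/2}.$$
It therefore suffices to show, for each $Q$,
$$\int_Q\Big(\sum_{\theta\subset S}|F_\theta|^2\Big)^{p/2}\gtrsim a^pR^{3/2}m_S(Q)^{p/2}.$$
Since $p\ge 2$, Jensen's (or H\"older's) inequality on $Q$ gives
$$\frac{1}{|Q|}\int_Q\Big(\sum_{\theta}|F_\theta|^2\Big)^{p/2}\ge\Big(\frac{1}{|Q|}\int_Q\sum_{\theta}|F_\theta|^2\Big)^{p/2},$$
and $|Q|=R^{3/2}$. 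This reduces the problem to the $L^2$ lower bound
$$\frac{1}{|Q|}\int_Q\sum_{\theta\subset S}|F_\theta|^2\gtrsim a^2 m_S(Q).$$

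To prove this $L^2$ bound, fix $\theta\subset S$ and consider the wave packets $T\in\T_\theta$. These tubes are parallel translates of one another, with cross-sections of size $R^{1/2}$. Hence $Q$ meets only $O(1)$ tubes of $\T_\theta$, and $m_S(Q)=\sum_{\theta\subset S}\#\{T\in\T_\theta: T\cap Q\ne\emptyset\}$ with each summand $O(1)$. For a given $\theta$, the restriction of $F_\theta$ to $Q$ is a sum of $O(1)$ wave packets of modulus $\sim a$, all with Fourier support in the same cap $\theta$. We then use local $L^2$ orthogonality of the packets: the tubes in $\T_\theta$ are essentially disjoint translates, so on a slightly enlarged cube one has $\int_Q|F_\theta|^2\gtrsim\sum_{T\cap Q\neq\emptyset}\int_Q|F_T|^2\sim a^2R^{3/2}\#\{T\in\T_\theta:T\cap Q\neq\emptyset\}$. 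Alternatively, we may redefine $m_S(Q)$ through the essential support of each packet, so that $Q\subset T$ implies $|F_\theta|\gtrsim a$ on most of $Q$. Summing over $\theta\subset S$ yields the $L^2$ bound.

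The main obstacle is that the wave packets within a single $\theta$ can interfere on $Q$ near tube boundaries, and their tails decay only rapidly rather than vanishing. We plan to handle this in one of two ways:
\begin{itemize}
\item take the wave packets to have disjoint essential supports, using a partition-of-unity construction in physical space;
\item note that the losses are absorbed by replacing $B_R$ with $w_{B_R}$, and the cubes meeting $T$ with those contained in a slightly enlarged $T$, which only changes $m_S$ by $O(1)$ factors.
\end{itemize}
With this in place, summing the cube estimates over $Q$ and taking $p$-th roots gives the lemma.
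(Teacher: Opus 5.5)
Your argument is correct and is essentially the paper's: tile $B_R$ by $R^{1/2}$-cubes, bound $\bS_SF$ from below on each $Q$ by $(\sum_{T\in\T_S}|F_T|^2)^{1/2}$, and use that each packet meeting $Q$ has size $\sim a$ there. The one difference is that you first apply Jensen to pass to $L^2$ averages on $Q$, whereas the paper compares the square functions directly in $L^p(Q)$; in either version, the within-$\theta$ comparison is cleanest via positivity of the physical-space partition of unity rather than orthogonality ($F_T=\psi_TF_\theta$ with $\psi_T\ge 0$ and $\sum_T\psi_T=1$ gives $\sum_{T\in\T_\theta}|F_T|^2\le|F_\theta|^2$ pointwise).
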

\begin{proof}
    Note that $$\left\|\bS_SF\right\|_{L^p(B_R)}^p\gtrsim \sum_{Q\subset B_R}\left\|\left(\sum_{T\in \T_S}|F_T|^2\right)^{1/2}\right\|_{L^p(Q)}^p\sim\sum_{Q\subset B_R}|Q|a^p m_S(Q)^{\frac{p}{2}}.$$
\end{proof}

\begin{corollary}\label{cor:MultMomentReduction3}
    Suppose for each strip $S$,
    \begin{equation}\label{eqn:StripwiseIncidenceMoment}
        \sum_{Q\in \sQ_X}m_S(Q)^{p/2}\lessapprox \lambda^{\frac{p}{2}-1}\sum_{Q\subset B_R}m_S(Q)^{p/2}.
    \end{equation}
    Then, Theorem \ref{thm:strongStripDec} holds.
\end{corollary}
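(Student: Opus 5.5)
The plan is to combine Proposition \ref{prop:LpLHS3} with Lemma \ref{lem:LpRHS} through the hypothesis \eqref{eqn:StripwiseIncidenceMoment}. Throughout, $\T$ and $F$ denote the refined collection $\T^{'}$ and function $\tilde F$ from Lemma \ref{label:uniformStats}.

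First, apply Lemma \ref{label:uniformStats} to get $\|F\|_{L^p(B_R)}\lessapprox \|\tilde F\|_{L^p(X)}$. Then Proposition \ref{prop:LpLHS3} gives
\[
\|\tilde F\|_{L^p(X)}\lessapprox aR^{\frac{3}{2p}}\lambda^{-\frac12+\frac1p}\Big(\sum_S\Big(\sum_{Q\in\sQ_X}m_S(Q)^{p/2}\Big)^{2/p}\Big)^{1/2}.
\]
Next, insert \eqref{eqn:StripwiseIncidenceMoment} for each strip $S$. Raising it to the power $2/p$ produces the factor $\lambda^{(\frac p2-1)\frac2p}=\lambda^{1-\frac2p}$, and taking the square root turns this into $\lambda^{\frac12-\frac1p}$. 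This exactly cancels the prefactor $\lambda^{-\frac12+\frac1p}$. We are left with
\[
\|F\|_{L^p(B_R)}\lessapprox \Big(\sum_S\Big(aR^{\frac{3}{2p}}\Big(\sum_{Q\subset B_R}m_S(Q)^{p/2}\Big)^{1/p}\Big)^2\Big)^{1/2}.
\]
By Lemma \ref{lem:LpRHS}, each summand is $\lesssim \|\bS_S\tilde F\|_{L^p(w_{B_R})}^2$.

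The remaining issue, which I expect to be the only real obstacle, is to pass from $\bS_S\tilde F$, built from the pruned wave packets $\T^{'}$ with uniform amplitude $a$, back to $\bS_SF$ for the original $F$. Pointwise we have $|\tilde F_\theta|=|\sum_{T\in\T^{'}_\theta}F_T|$. Since the wave packets in $\T_\theta$ are essentially disjoint (each point meets $O(1)$ of them, with Schwartz tails), $|\tilde F_\theta|\lesssim$ a weighted version of $|F_\theta|$ localized at scale $R^{1/2}$. More cleanly, I would use the locally constant property: on each $R^{1/2}\times R^{1/2}\times R$ tube, $|F_T|\sim a$ is essentially $|F_\theta|$ there. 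Hence $\sum_{\theta\subset S}|\tilde F_\theta|^2\lesssim \sum_{\theta\subset S}|F_\theta|^2*\phi$ for a rapidly decaying $L^1$-normalized bump $\phi$ at scale $R^{1/2}$. Since $p/2\ge1$, Minkowski/Young then give $\|\bS_S\tilde F\|_{L^p(w_{B_R})}\lesssim \|\bS_SF\|_{L^p(w_{B_R})}$. The same comparison also justifies that the dyadic pigeonholing over $a$ (with only $O(\log R)$ values) costs only $\lessapprox1$.

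Chaining these bounds yields $\|F\|_{L^p(B_R)}\lessapprox(\sum_S\|\bS_SF\|^2_{L^p(w_{B_R})})^{1/2}$, which is Theorem \ref{thm:strongStripDec}.
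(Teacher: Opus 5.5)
Your proof is correct and is essentially the paper's argument: it likewise feeds the hypothesis \eqref{eqn:StripwiseIncidenceMoment} into Proposition \ref{prop:LpLHS3}, so that $\lambda^{\frac12-\frac1p}$ cancels $\lambda^{-\frac12+\frac1p}$, and then uses Lemma \ref{lem:LpRHS} to bound what remains by $\bigl(\sum_S\|\bS_S F\|_{L^p(w_{B_R})}^2\bigr)^{1/2}$. Your extra comparison of $\bS_S\tilde F$ with $\bS_S F$ is a step the paper silently absorbs into its abuse of notation; it is harmless, and with the physical-space convention $F_T=\psi_T F_\theta$ (where $\psi_T\ge 0$ and $\sum_T\psi_T=1$) it is even the pointwise bound $|\tilde F_\theta|\le |F_\theta|$, so the convolution and Young argument is not needed.
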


\begin{proof}
    Observe that by Lemma \ref{lem:LpRHS} we have \begin{equation}\label{eqn:RHS}
        \left(\sum_{S} \left\|\bS_SF\right\|_{L^p(B_R)}^2\right)^{1/2}\gtrsim R^{\frac{3}{2p}}a\left(\sum_S \left(\sum_{Q\subset B_R}m_S(Q)^{p/2}\right)^{2/p}\right)^{1/2}.
    \end{equation} Now, apply Proposition \ref{prop:LpLHS3} and the strip assumption \eqref{eqn:StripwiseIncidenceMoment} to see that this gives Theorem \ref{thm:strongStripDec}.
\end{proof}

Therefore, to prove Theorem \ref{thm:strongStripDec}, it suffices to show \eqref{eqn:StripwiseIncidenceMoment} given the tube-cube statistics from Subsection \ref{subsect:wavepackets3}, namely $\#\sQ_X(T)\sim \lambda R^{1/2}$ for each $T\in \T$.  

\subsection{Kakeya-strength Incidence Geometry}

The estimate \eqref{eqn:StripwiseIncidenceMoment} follows from the following incidence result.
\begin{theorem}\label{thm:multMomentKakeya}
    Let $\T$ be a collection of tubes in $\R^3$ from a strip $S$ and $X$ the union of $R^{1/2}$-cubes in $B_R$ from a collection $\sQ$ such that $\#\sQ(T)\sim \lambda R^{1/2}$ for each $T\in \T$. Then, for each $2< p\le 4$, $$\sum_{Q\in \sQ}m(Q)^{p/2}\lessapprox \lambda^{2-\frac{p}{2}}\sum_{Q\subset B_R}m(Q)^{p/2}.$$
\end{theorem}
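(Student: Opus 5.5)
The plan is to exploit the coplanar structure of $\T_S$ first, and then prove the estimate by a per-tube Hölder argument inside a single plane. Since $S$ is the neighborhood of $\{(\xi_1,\xi_2,\xi_1^2+\xi_2^2):\xi_2\in I\}$ with $|I|=R^{-1/2}$, the normals $(-2\xi_1,-2\xi_2,1)$ of the caps $\theta\subset S$ all lie, up to angle $O(R^{-1/2})$, in a fixed plane $V_S$. Hence every $T\in\T_S$ lies, up to a constant dilation, in one of the parallel slabs $\Sigma$ of dimensions $R\times R\times R^{1/2}$ with long sides parallel to $V_S$. These slabs tile $B_R$, and each $R^{1/2}$-cube $Q$ meets $O(1)$ of them. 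Consequently $m(Q)$ only counts tubes in the slab(s) through $Q$, and both sides of the inequality split as sums over slabs. It therefore suffices to prove the estimate for a single slab $\Sigma$, with $\T_\Sigma$ and $\sQ_\Sigma=\sQ\cap\Sigma$. After rescaling, this is a planar problem: $\delta=R^{-1/2}$-tubes in $[0,1]^2$ with $\delta$-separated directions (many parallel tubes allowed), a grid of $\delta$-squares, and a shading $\sQ(T)$ of $\sim\lambda\delta^{-1}$ squares on each tube.

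Inside a slab, write $q=\frac p2-1\in(0,1]$ and rewrite both sides as sums over tubes:
$$\sum_{Q\in\sQ}m(Q)^{p/2}=\sum_{T}\sum_{Q\in\sQ(T)}m(Q)^{q},\qquad \sum_{Q\subset B_R}m(Q)^{p/2}=\sum_T\sum_{Q\subset T}m(Q)^q .$$
The endpoints are then transparent. At $p=4$ the claim is trivial, since $\sQ\subset$ all cubes. At $p=2$ it is an identity, since $\sum_{\sQ}m=\lambda R^{1/2}\#\T=\lambda\sum_{\text{all}}m$. For intermediate $p$, I would apply concavity of $x\mapsto x^q$ along each tube:
$$\sum_{Q\in\sQ(T)}m(Q)^q\le(\lambda R^{1/2})^{1-q}\Bigl(\sum_{Q\in\sQ(T)}m(Q)\Bigr)^q .$$
Since $(\lambda R^{1/2})^{1-q}=\lambda^{2-p/2}(R^{1/2})^{1-q}$, this produces the required power $\lambda^{2-\frac p2}$. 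It then remains to show
$$\sum_T (R^{1/2})^{1-q}\Bigl(\sum_{Q\subset T}m(Q)\Bigr)^q\lessapprox \sum_T\sum_{Q\subset T}m(Q)^q .$$
In words, along a typical tube the multiplicity should be roughly constant over most of its length, so that Jensen's inequality is reversible up to $R^{o(1)}$.

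To prove this I would dyadically pigeonhole. First I would pigeonhole the multiplicity, reducing to cubes with $m(Q)\sim\mu$. Second, I would pigeonhole the number of such cubes on each tube, reducing to tubes containing $\sim\nu R^{1/2}$ cubes of multiplicity $\sim\mu$. Each such dyadic class contributes $\sim\nu R^{1/2}\mu^q$ per tube to the right side, and $\lessapprox(R^{1/2})^{1-q}(\nu R^{1/2}\mu)^q$ to the left side. So the needed inequality is $\nu^q\lesssim \nu$, which fails when $\nu\ll1$. The difficulty is therefore exactly the configurations where the incidences on a tube are concentrated in a short segment of it. This non-two-ends situation is the main obstacle.

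To handle it, I would use the planar geometry rather than a pointwise argument. If a tube's heavy cubes occupy only a $\nu$-fraction of its length, localize to balls of radius $\nu R$ and use parabolic-type rescaling within the plane, where directions remain $R^{-1/2}$-separated and become effectively two-ended. On the two-ended part, apply a planar Furstenberg/Córdoba-type $L^2$ bound: at each point there is at most one tube per direction, so bush counting gives $\sum_Q m(Q)^2\lessapprox$ (number of incidences) times $R^{1/2}$. Interpolating this with the trivial $L^1$ count, I would bound the heavy cubes against $\sum_{Q\subset T}m(Q)^q$, and then induct on scales over the dyadic radii. I expect the bookkeeping of this induction, together with making the pigeonholed classes compatible with the original shading $\sQ(T)$, to be the hardest part of the argument.
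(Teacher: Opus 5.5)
Your first steps coincide with the paper's. The reduction to a single slab, the H\"{o}lder (concavity) step along each tube that produces $\lambda^{2-\frac p2}$, and the resulting target
\[
(R^{1/2})^{1-q}\sum_{T} H_T^{\,q}\lessapprox \sum_{Q\subset B_R} m(Q)^{p/2},\qquad H_T=\sum_{Q\cap T\neq\emptyset} m(Q),
\]
are exactly what the paper proves next. But this inequality is the whole content of the theorem, and your proposal does not prove it.

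The heuristic that multiplicity is roughly constant along a typical tube is not the right mechanism. In fact the tube-by-tube version you are pigeonholing toward is false. Take a tube $T$, and through each of $R^{1/4}$ consecutive cubes of $T$ put one tube in each of the $\sim R^{1/2}$ strip directions making angle $\sim 1$ with $T$. Then $m\sim R^{1/2}$ on that segment of $T$ and $m\sim 1$ on the rest of $T$. Hence $(R^{1/2})^{1-q}H_T^q\approx R^{\frac12+\frac q4}$, whereas $\sum_{Q\subset T}m(Q)^q\approx R^{\frac14+\frac q2}+R^{\frac12}$, which is smaller by a power of $R$ when $0<q<1$. The excess on such a tube can only be paid for by the right-hand-side mass of the \emph{other} tubes, so the argument must be global.

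Your proposed remedy (localizing to balls of radius $\nu R$, rescaling, a two-ends Furstenberg bound, induction on scales) is not specified enough to check. It is also unclear it makes sense: inside a ball of radius $\nu R$, tubes whose directions differ by less than $\nu^{-1}R^{-1/2}$ are indistinguishable, so the localized problem is not of the same type. The one concrete bound you state, $\sum_Q m(Q)^2\lesssim R^{1/2}\sum_Q m(Q)$, is just the pointwise bound $m(Q)\lesssim R^{1/2}$. That is far too lossy when most multiplicities are small.

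The missing idea is an elementary C\'{o}rdoba-type count over pairs of tubes, which is what the paper does.
\begin{enumerate}
\item Write $H_T=\sum_{T'}c(T,T')$, where $c(T,T')$ is the number of cubes meeting both $T$ and $T'$. Split dyadically according to $c(T,T')\sim\rho$, $1\le\rho\lesssim R^{1/2}$. Since $q\le 1$ this costs only a logarithm: $H_T^q\lessapprox\sum_\rho\rho^q\,\#\{T':c(T,T')\sim\rho\}^q$.
\item If $c(T,T')\sim\rho$, then $T$ and $T'$ lie in a common fat tube of dimensions $R\times O(R\rho^{-1})$, up to passing to $O(1)$ neighboring fat tubes. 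With $M(P)$ the number of tubes in $P$, Young's inequality gives $\sum_T\#\{T':c(T,T')\sim\rho\}^q\lesssim\sum_P M(P)^{1+q}=\sum_P M(P)^{p/2}$.
\item Finally $R^{1/2}M(P)\sim\sum_{Q\subset P}m_P(Q)$. H\"{o}lder over the $\sim R\rho^{-1}$ cubes in $P$, together with $\sum_P m_P(Q)\lesssim m(Q)$, gives
\[
\sum_P M(P)^{p/2}\lesssim (R^{1/2})^{\frac p2-2}\rho^{-q}\sum_{Q\subset B_R}m(Q)^{p/2}.
\]
\end{enumerate}
The factor $\rho^{-q}$ cancels $\rho^q$, and $(R^{1/2})^{\frac p2-2}=(R^{1/2})^{q-1}$ is exactly the power your target requires. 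No two-ends reduction or Furstenberg estimate is needed at this exponent. The paper brings in that machinery only later, to improve $\lambda^{2-\frac p2}$ to $\lambda^{\frac32-\frac p4}$ in the range $3<p\le\frac{10}{3}$.
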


\begin{proof}[Proof of Theorem \ref{thm:strongStripDec} for $p\le 3$]
    We apply Theorem \ref{thm:multMomentKakeya} to Corollary \ref{cor:MultMomentReduction3} and note that $$\lambda^{2-\frac{p}{2}}\lesssim \lambda^{\frac{p}{2}-1} \iff p\le 3.$$
\end{proof}

Now, we dedicate the rest of this section to proving Theorem \ref{thm:multMomentKakeya}. Fix a strip $S$. Note that the tubes in $\T$ have directions which are essentially planar. Indeed, suppose $\theta\subset S$ is centered at $(\xi_1,c_S, \xi_1^2+c_S^2)$. The long direction of the tubes in $\T_\theta$ is $(2\xi_1, 2c_S, -1)$. Hence, since we may assume the strips are narrow in the $\xi_2$-direction, these directions for $\theta \subset S$ are captured by the plane normal to $(0, 1, 2c_S)$.

\begin{figure}
    \centering
    \begin{tikzpicture}[
   x={(0.866cm,-0.500cm)},
  y={(0.52cm,0.32cm)},
  z={(0.500cm,0.866cm)},
  line cap=round,
  line join=round,
  scale=0.7
]

\fill[cyan!18,opacity=.38]
  (0,0,5)--(6,0,5)--(6,0,6)--(0,0,6)--cycle;
\fill[cyan!12,opacity=.32]
  (6,0,5)--(6,6,5)--(6,6,6)--(6,0,6)--cycle;
\fill[cyan!8,opacity=.24]
  (0,6,5)--(6,6,5)--(6,6,6)--(0,6,6)--cycle;

\foreach \h in {1,2,3,4,5}{
  \draw[black!55,line width=.45pt]
    (0,0,\h)--(6,0,\h)--(6,6,\h)--(0,6,\h)--cycle;
}

\newcommand{\tube}[2]{%
  \draw[blue!65!black,line width=3pt] #1 -- #2;
  \draw[cyan!20,line width=.65pt,opacity=.9] #1 -- #2;
  \filldraw[
    fill=cyan!12,
    draw=blue!65!black,
    line width=.45pt
  ] #1 circle[radius=1.55pt];
  \filldraw[
    fill=cyan!12,
    draw=blue!65!black,
    line width=.45pt
  ] #2 circle[radius=1.55pt];
}

\tube{(0.35,0.75,5.22)}{(5.65,5.20,5.72)}
\tube{(0.55,5.45,5.30)}{(5.55,0.55,5.78)}
\tube{(0.85,0.35,5.76)}{(5.45,5.55,5.18)}
\tube{(0.35,3.85,5.61)}{(5.70,2.10,5.37)}
\tube{(2.15,0.30,5.16)}{(3.05,5.72,5.84)}
\tube{(4.75,0.40,5.80)}{(1.65,5.55,5.27)}

\fill[cyan!8,opacity=.12]
  (0,0,6)--(6,0,6)--(6,6,6)--(0,6,6)--cycle;

\draw[black,line width=.85pt]
  (0,0,0)--(6,0,0)--(6,6,0)--(0,6,0)--cycle;
\draw[black,line width=.85pt]
  (0,0,6)--(6,0,6)--(6,6,6)--(0,6,6)--cycle;

\foreach \x/\y in {0/0,6/0,6/6,0/6}{
  \draw[black,line width=.85pt]
    (\x,\y,0)--(\x,\y,6);
}

\draw[blue!45!black,line width=.9pt]
  (0,0,5)--(6,0,5)--(6,6,5)--(0,6,5)--cycle;

\end{tikzpicture}
    \caption{The plates $U$ and the tubes in $\T=\T_S$ captured by a plate.}
    \label{fig:plates}
\end{figure}

Tile $B_R$ with $R\times R\times R^{1/2}$ plates $U$ that are parallel to this plane; see Figure \ref{fig:plates}. Note that a tube $T\in \T$ is contained in essentially one plate $U$ and tubes in distinct plates are essentially disjoint. Fix one of these plates $U$. It suffices to show $$\sum_{Q\subset X\cap U}m(Q)^{p/2}\lessapprox \lambda^{2-\frac{p}{2}}\sum_{Q\subset U}m(Q)^{p/2}.$$ We will now restrict $\T$ and $X$ to one of these plates $U$. Therefore, we are left with the following planar incidence problem.

\begin{theorem}\label{thm:planarIncidenceKakeya}
    Let $\T$ be a collection of tubes in $\R^2$ and $X$ the union of $R^{1/2}$-squares in $B_R$ from a collection $\sQ$ such that $\#\sQ(T)\sim \lambda R^{1/2}$ for each $T\in \T$. Then, for each $2< p\le 4$, $$\sum_{Q\in \sQ}m(Q)^{p/2}\lessapprox \lambda^{2-\frac{p}{2}}\sum_{Q\subset B_R}m(Q)^{p/2}.$$
\end{theorem}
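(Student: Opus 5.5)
The plan is to prove Theorem \ref{thm:planarIncidenceKakeya} by pigeonholing the multiplicity among the squares of $\sQ$, and then to bound $\sum_{Q\subset B_R}m(Q)^{p/2}$ from below in two different ways that interpolate to the exponent $2-\frac p2$. Throughout I rescale by $R^{-1/2}$, so squares have unit size and tubes are $1\times N$ with $N=R^{1/2}$. Tubes from distinct caps have directions that are $N^{-1}$-separated, and tubes with the same direction through a common square essentially coincide. The endpoint cases suggest the shape of the argument. For $p=2$ the statement is an identity up to constants: $\sum_{Q\in\sQ}m(Q)=I(\sQ,\T)\sim\lambda N\#\T$, while $\sum_{Q\subset B_R}m(Q)\sim N\#\T$. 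For $p=4$ the statement is trivial, since $\sQ$ is a subset of all squares. Since H\"older's inequality runs in the wrong direction on the right-hand side, I will not interpolate the statements themselves; instead I will interpolate at the level of dyadic pieces.

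\textbf{Step 1 (pigeonholing).} I would pigeonhole a dyadic $\mu$ and set $\sQ_\mu=\{Q\in\sQ: m(Q)\sim\mu\}$, so that $\sum_{Q\in\sQ}m(Q)^{p/2}\lessapprox\#\sQ_\mu\,\mu^{p/2}$. I then let $\T_\mu$ be the set of tubes meeting $\sQ_\mu$. Since each tube meets at most $\lesssim\lambda N$ squares of $\sQ$, counting incidences gives
$$\#\sQ_\mu\,\mu\lesssim I(\sQ_\mu,\T_\mu)\lesssim\lambda N\#\T_\mu\lesssim\lambda\sum_{Q\subset B_R}m(Q).$$
This is the first lower bound, $\sum_{Q\subset B_R} m(Q)^{p/2}\ge\sum_{Q\subset B_R} m(Q)\gtrsim\lambda^{-1}\#\sQ_\mu\mu$. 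It is the only place where the hypothesis on $\lambda$ is used.

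\textbf{Step 2 (bush lower bound).} Fix $Q\in\sQ_\mu$. The $\sim\mu$ tubes through $Q$ have $N^{-1}$-separated directions, so inside the disc of radius $r\sim N/\mu$ around $Q$ they have not yet separated, and a positive proportion of the squares there have multiplicity $\gtrsim\mu$. I would then choose a maximal $N/\mu$-separated subfamily of $\sQ_\mu$. The discs of radius $N/\mu$ around it are disjoint, and each such disc contains $\lesssim (N/\mu)^2$ squares of $\sQ_\mu$. This yields the second lower bound
$$\sum_{Q\subset B_R}m(Q)^{p/2}\gtrsim \#\sQ_\mu\,\mu^{p/2}\cdot\min\Bigl(1,\frac{N^2/\mu^2}{\#(\sQ_\mu\cap\text{disc})}\Bigr),$$
together with the trivial bound $\sum_{Q\subset B_R}m(Q)^{p/2}\ge\#\sQ_\mu\mu^{p/2}$.

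\textbf{Step 3 (interpolation).} I would combine the two lower bounds by taking the geometric mean with weights $p/2-1$ and $2-p/2$. The $\lambda$-exponent then comes out as exactly $\lambda^{-(2-p/2)}$, as the statement requires. The main obstacle is that the $\mu$-exponents do not match: the first bound is linear in $\mu$ while we need $\mu^{p/2}$. Closing this gap requires an upper bound on $\mu$ in terms of the local geometry, or a strengthened incidence bound showing that $\#\sQ_\mu\mu$ is controlled by $\lambda\sum_{Q\subset B_R}m(Q)^{p/2}\mu^{1-p/2}$. I would first try the C\'ordoba-type planar $L^2$ Kakeya estimate applied to the bushes of Step 2. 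The aim is to show that, on the set where $m\gtrsim\mu$, the tubes of $\T_\mu$ spend a $\gtrsim\mu/N$ fraction of their length near high-multiplicity squares. This would upgrade Step 1 to $\#\sQ_\mu\mu^{p/2}\lessapprox\lambda\sum_{Q\subset B_R}m(Q)^{p/2}$ at $p=2$ and to the trivial bound at $p=4$, in a form that is log-convex in $p$. If the disjointness of the bush discs fails, which happens when the squares of $\sQ_\mu$ cluster, I would do a further dyadic pigeonholing on the cluster size and rerun the counting of Step 1 at that coarser scale.
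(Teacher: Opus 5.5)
Your proposal does not prove the theorem. The decisive step is Step 3, where you concede the $\mu$-exponent mismatch and only sketch possible repairs, and the ingredients you do establish cannot close it.

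Step 2 rests on a false premise. The $\sim\mu$ tubes through $Q$ have only $\lesssim\mu\cdot N/\mu=N$ incidences with squares of the disc of radius $N/\mu$. So at most $\lesssim N/\mu$ of its $\sim N^2/\mu^2$ squares can receive multiplicity $\gtrsim\mu$ from them, and nothing forces other tubes of $\T$ to fill the rest. Moreover, because of the $\min(1,\cdot)$, your displayed bound is never better than the trivial one.

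So you really have only Step 1 and $\sum_{Q\subset B_R}m(Q)^{p/2}\ge\#\sQ_\mu\mu^{p/2}$. The geometric mean loses $\mu^{(p/2-1)(2-p/2)}$. Even the better of the two bounds only works in the regime $\mu\lesssim\lambda^{-1}$, which covers the bush example but not the general case. For instance, take all $\sim N^2$ tubes (every direction and translate), so $m(Q)\sim N$ for every $Q$. Let $\sQ$ be a random subset of density $\lambda\ll 1$ with $\lambda N\gg\log N$.
\begin{itemize}
\item You need $\sum_{Q\subset B_R}m(Q)^{p/2}\gtrsim\lambda^{p/2-1}N^{2+p/2}$; the true value is $N^{2+p/2}$.
\item Step 1 gives only $N^3$, short by $(\lambda N)^{p/2-1}$.
\item The trivial bound gives $\lambda N^{2+p/2}$, short by $\lambda^{-(2-p/2)}$.
\item Step 2 is empty, since $N/\mu\sim 1$.
\end{itemize}
What none of your bounds sees is the large multiplicity at squares outside $\sQ$ lying far away along the tubes. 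The suggested ``log-convex in $p$'' upgrade also fails for the reason you identified at the start: the right-hand side is log-convex in $p$, which is the wrong direction for interpolating upper bounds.

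The missing idea is to apply H\"older along each tube before any pigeonholing in $\mu$:
\[ \sum_{Q\in\sQ}m(Q)^{p/2}=\sum_T\sum_{Q\in\sQ(T)}m(Q)^{p/2-1}\le\sum_T H_T^{p/2-1}(\#\sQ(T))^{2-p/2}, \]
where $H_T=\sum_{Q\cap T\neq\emptyset}m(Q)$ sums over \emph{all} squares met by $T$. This extracts exactly $(\lambda N)^{2-p/2}$ with no $\mu$-bookkeeping. It leaves the $\lambda$-free hairbrush estimate $N^{2-p/2}\sum_T H_T^{p/2-1}\lessapprox\sum_{Q\subset B_R}m(Q)^{p/2}$. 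This is precisely the ``reverse H\"older'' you were missing, and it is where planar geometry (your C\'ordoba-type idea) genuinely enters. The paper proves it as follows:
\begin{itemize}
\item Write $H_T=\sum_{T'}c(T,T')$, with $c(T,T')$ the number of common squares, and split dyadically in $\rho\sim c(T,T')$.
\item Such $T,T'$ lie in a common $N\times N\rho^{-1}$ fat tube $P$ (that is, $R\times R\rho^{-1}$ before rescaling).
\item Young's inequality over neighbouring fat tubes gives $\sum_T(\#\{T':c(T,T')\sim\rho\})^{p/2-1}\lesssim\sum_P M(P)^{p/2}$.
\item Finish with H\"older over the $\sim N^2\rho^{-1}$ squares of each $P$, together with $\sum_P m_P(Q)^{p/2}\le m(Q)^{p/2}$.
\end{itemize}
The factors $\rho^{p/2-1}$ cancel, so only the $\log R$ from the $\rho$-sum is lost.
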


We now apply a planar Kakeya argument to prove Theorem \ref{thm:planarIncidenceKakeya} and hence, finish the proof of Theorem \ref{thm:multMomentKakeya}.
\begin{proof}

First, an application of H\"{o}lder's inequality:
\begin{align*}
    \sum_{Q\in \sQ}m(Q)^{p/2} &= \sum_{Q\in \sQ}m(Q)^{\frac{p}{2}-1}\sum_{T: Q\cap T\neq \emptyset}1\\
    &= \sum_{T} \sum_{Q\in \sQ(T)}m(Q)^{\frac{p}{2}-1}\\
    &\le \sum_T (\sum_{Q\in \sQ(T)}m(Q))^{\frac{p}{2}-1}(\sum_{Q\in \sQ(T)}1)^{2-\frac{p}{2}}\\
    &\lesssim \lambda^{2-\frac{p}{2}}(R^{1/2})^{2-\frac{p}{2}}\sum_T (\sum_{Q: Q\cap T\neq \emptyset}m(Q))^{\frac{p}{2}-1}.
\end{align*}
Define for each $T\in \T$, the hairbrush quantity $$H_T = \sum_{Q: Q\cap T\neq \emptyset}m(Q)= \sum_{T^{'}}\sum_{Q\cap T\cap T^{'}\neq \emptyset}1= \sum_{T^{'}}c(T, T^{'}) \lesssim\sum_{\substack{1\le \rho \lesssim R^{1/2}\\ dyadic}}\rho \#\{T^{'}: c(T, T^{'})\sim \rho\}$$ where $c(T,T^{'})=\#\{Q : Q\cap T\cap T^\prime\neq \emptyset\}$. The above computation simplifies to $$\sum_{Q\in \sQ}m(Q)^{\frac{p}{2}}\lesssim \lambda^{2-\frac{p}{2}}(R^{1/2})^{2-\frac{p}{2}}\sum_T H_T^{\frac{p}{2}-1}.$$ Let $q=\frac{p}{2}-1\in (0,1]$. Note $$H_T^{q}\lessapprox \sum_{\substack{1\le \rho \lesssim R^{1/2}\\ dyadic}}\rho^{q}(\#\{T^{'}:c(T,T^{'})\sim \rho\})^{q}.$$ Note that  if $c(T, T^{'})\sim \rho$, then $T$ and $T^{'}$ lie in a common fat $R\times O(R\rho^{-1})$ tube. Let $\{P\}$ be a system of $R\times (R\rho^{-1})$ tubes covering $B_R$ (that is, there are $\sim \rho$ many fat directions and $\sim \rho$ many fat tubes $P$ per direction). Note that each $T\in \T$ is essentially contained in one fat tube $P$. Let $M(P) = \#\{T\in \T: T\subset P\}$. So, we have $$\sum_T(\#\{T^{'}:c(T,T^{'})\sim \rho\})^{q}\lesssim \sum_P \sum_{T\subset P}\left(\sum_{P^{'}\sim P}M(P^{'})\right)^{q}\lesssim \sum_{P, P^{'}: P\sim P^{'}}M(P)M(P^{'})^{q}$$ $$\lesssim \sum_{P}M(P)^{p/2},$$ where $P^{'}\sim P$ means they are neighboring fat tubes.
Now, by double counting the incidences inside of $P$, $$R^{1/2}M(P) \sim \sum_{Q\subset P}m_P(Q)$$ where $m_P(Q) = \#\{T\subset P : Q\cap T\neq \emptyset\}$. So, again by H\"{o}lder's inequality,\begin{align*}
    \sum_P M(P)^{p/2}&\sim (R^{1/2})^{-p/2}\sum_{P}(\sum_{Q\subset P} m_P(Q))^{p/2}\\
    &\lesssim(R^{1/2})^{-p/2}\sum_P \sum_Q m_P(Q)^{p/2}(R\rho^{-1})^{q}\\
    &= (R^{1/2})^{\frac{p}{2}-2}\rho^{-q}\sum_Q \sum_P m_P(Q)^{p/2}.
\end{align*} Since $p/2>1$, we have the trivial inequality $$\sum_P m_P(Q)^{p/2}\le (\sum_P m_P(Q))^{p/2}\lesssim m(Q)^{p/2}.$$ In summary, this gives $$\sum_{Q\in \sQ}m(Q)^{\frac{p}{2}}\lessapprox \lambda^{2-\frac{p}{2}}\sum_{Q\subset B_R} m(Q)^{p/2}.$$
\end{proof}

\begin{remark}
    Theorem \ref{thm:planarIncidenceKakeya} is essentially sharp. We give the example below. 
\end{remark}

Therefore, to extend the range to $3<p\le \frac{10}{3}$, we must deal with this enemy bush example; see Figure \ref{fig:BushExample}. This is done with the two-ends reduction which will give us the gain $\lambda^{\frac{3}{2}-\frac{p}{4}}$, instead of $\lambda^{2-\frac{p}{2}}$ present here. The enemy bush example will be captured using induction-on-scales in the non-two-ends case. Thus, the remaining two-ends case will give us an improved incidence result.

Let $\T$ be a bush about the origin coming from $N$ consecutive directions with $1\lesssim N\lesssim R^{1/2}$  and $\sQ$ be the collection of squares which are incident to all the tubes in $\T$. This gives $\lambda\sim \frac{1}{N}$. So, $$\sum_{Q\in \sQ}m(Q)^{p/2}\sim \frac{R^{1/2}}{N}(N)^{p/2}.$$ Similar to the calculations in Subsection \ref{subsect:Examples}, we have $$\sum_{Q\subset B_R}m(Q)^{p/2}\sim \frac{1}{R}\|\sum_{T\in \T}1_T\|_{p/2}^{p/2}\lessapprox NR^{1/2}.$$ Hence, we have $$\sum_{Q\in \sQ} m(Q)^{p/2}\approx \lambda^{2-\frac{p}{2}}\sum_{Q\subset B_R}m(Q)^{p/2}.$$
\begin{figure}[b!]
    \centering
    \begin{tikzpicture}[
    line cap=round,
    line join=round,
    tube/.style={draw=black, thick, fill=blue!5},
    scale=0.75
]

\path[tube]
    (0,1.45) arc[start angle=90,end angle=270,
                 x radius=.10,y radius=.18]
    -- (7.2,.42) -- (7.2,.66) -- cycle;

\path[tube]
    (0,.85) arc[start angle=90,end angle=270,
                x radius=.10,y radius=.18]
    -- (7.2,.38) -- (7.2,.62) -- cycle;

\path[tube]
    (0,.25) arc[start angle=90,end angle=270,
                x radius=.10,y radius=.18]
    -- (7.2,.34) -- (7.2,.58) -- cycle;

\path[tube]
    (0,-.35) arc[start angle=90,end angle=270,
                 x radius=.10,y radius=.18]
    -- (7.2,.30) -- (7.2,.54) -- cycle;

\fill[gray!55]
    (6.25,.28) -- (7.2,.28) -- (7.2,.72) -- (6.25,.72) -- cycle;

\draw[thick]
    (6.25,.28) -- (7.2,.28) -- (7.2,.72) -- (6.25,.72)--(6.25,.28);

\draw[
    decorate,
    decoration={brace,amplitude=6pt,mirror}
]
    (-.35,1.65) -- (-.35,-.55)
    node[midway,left=9pt] {$N$ tubes};

\draw[
    decorate,
    decoration={brace,amplitude=6pt}
]
    (6.25,1.00) -- (7.2,1.00)
    node[midway,above=9pt]
    {$\dfrac{R^{1/2}}{N}$ squares};

\end{tikzpicture}
    \caption{Incomplete Bush Example}
    \label{fig:BushExample}
\end{figure}
\newpage

\begin{remark}
    The $\ell^p(L^p)$ strip decoupling inequality for the full range $2\le p\le \frac{10}{3}$ follows by the same arguments in this section. The only difference is that it suffices to show a trilinear version of the $\ell^p(L^p)$ strip decoupling, per Demeter's argument in \cite[Theorem 3.1]{demeter2026BeyondCanonical}. This allows us to interpolate between $L^3$ (instead of $L^2$) and $L^4$. We can then reduce to the same incidence estimate Theorem \ref{thm:planarIncidenceKakeya}.
\end{remark}

\section{Reduction to Incidence Estimates for Theorem \ref{thm:strongStripDec}}\label{sec:Reductionl^2(L^3)}\label{sec:p>3}

For the remainder of this paper, we focus on proving Theorem \ref{thm:strongStripDec} for the range $3<p\le \frac{10}{3}$ by modifying the ideas in the previous section to incorporate "two-endedness" of the cubes $Q$ in each tube $T\in \T$. Fix $\epsilon>0$ and $3<p\le \frac{10}{3}$. Let $C(R)$ be the best constant which satisfies \begin{equation}\label{eq:InductionInequality}
    \|F\|_{L^p(B_R)}\le C(R) \left(\sum_{S} \left\|\bS_SF\right\|_{L^p(w_{B_R})}^2\right)^{1/2},
\end{equation} for any $B_R\subset \R^3$ and $F$ with $\supp \wh{F} \subset \mathcal{M}(R^{-1})$. We aim to show $$C(R)\lesssim_\epsilon R^{\epsilon}.$$
We will use $\eta_1, \eta_2, \eta_3$ as small parameters that will be technically useful throughout. In particular, we will show $$C(R)\lessapprox C_{\eta_1, \eta_2,\eta_3} R^{\eta_3}C(R^{1-\eta_2})+R^{O(\eta_1+\eta_2)},$$ and then take $\eta_1 \sim \epsilon$, $\eta_2 = \epsilon^2$, and $\eta_3=\epsilon^3$. As a guide, $\eta_1$ will take the role of $\eta$ in Lemma \ref{label:uniformStats}. The parameters $\eta_2$ and $\eta_3$ will be for the scale we are measuring for "two-endedness" and the quantitative threshold for having "two-endedness," respectively.

\subsection{Wave Packet Decomposition and Pigeonholing}\label{subsect:wavepackets}

Fix a $B_R$ and a function $F$ as above. Recall the wave packet decomposition of $F$ as in Subsection \ref{subsect:wavepackets3}: $$F = \sum_{T\in \T} F_T.$$  By standard pigeonholing, we may assume $\|F_T\|_\infty \sim a$ for each $T\in \T$. We repeat the uniformization step, similar to Lemma \ref{label:uniformStats}, but we will refine $\lambda$ to quantify "two-endedness."  Let $\{B\}$ be $R^{1-\eta_2}$ cubes partitioning $B_R$. For each tube $T\in \T$, let $\mathcal{J}(T)=\{J\}$ be the partition of $T$ by $\{B\}$. That is, each $J=T\cap B$ for some $B$ which means each $J$ is a length $R^{1-\eta_2}$ segment of some $T$.

\begin{figure}[b!]
    \centering
    \begin{tikzpicture}[line cap=round, line join=round, scale=0.75]
\begin{scope}[rotate=0]
  \def\L{10}
  \def\h{0.32}

  \foreach \x in {0.35,3.05,4.55,6.15,8.25} {
    \fill[black!70] (\x,-\h) rectangle ++(0.48,0.64);
  }

  \draw[thick] (0,\h) -- (\L,\h);
  \draw[thick] (0,-\h) -- (\L,-\h);
  \foreach \x in {2,4,6,8} {
    \draw[thick] (\x,-\h) -- (\x,\h);
  }

  \draw[thick,fill=white] (0,0) ellipse[x radius=0.14,y radius=\h];
  \draw[thick,fill=white] (\L,0) ellipse[x radius=0.14,y radius=\h];

  \draw[thick] (0,0.88) -- (2,0.88);
  \draw[thick] (0,0.76) -- (0,1.00);
  \draw[thick] (2,0.76) -- (2,1.00);
  \node[above,rotate=0] at (1,0.93) {$J\in \J(T)$};

  \draw[decorate,decoration={brace,amplitude=5pt},thick]
    (6,0.59) -- (8.0,0.59)
    node[midway,above=8pt,rotate=0] {$\sim\nu R^{1/2}\text{ cubes}$};

  \draw[decorate,decoration={brace,mirror,amplitude=9pt},thick]
    (0,-0.68) -- (\L,-0.68)
    node[midway,below=15pt,rotate=0] {$\sim \beta\text{ many segments }J\in \J^{'}(T)$};

  \node[right] at (10.35,0) {$T$};
\end{scope}
\end{tikzpicture}
    \caption{A tube partitioned in segments $J$ with some shading.}
    \label{fig:TubeSegments}
\end{figure}

\newpage

\begin{lemma}\label{label:TwoEndsUniformStats}
   There exists \begin{itemize}
       \item $X\subset B_R$ which is a union of $R^{1/2}$-cubes $\sQ_X$,

       \item a subset of tubes $\T^{'}\subset \T$,

       \item for each tube $T$, a subset $\J^{'}(T)\subset \J(T)$
   \end{itemize} such that $$\|F\|_{L^p(B_R)}\lessapprox \|\sum_{T\in \T^{'}}\sum_{J\in \J^{'}(T)}F_T1_J\|_{L^p(X)}$$ where \begin{enumerate}
        
        \item for each $J\in \J^{'}(T)$, $\#\sQ_X(J)\sim \nu R^{1/2} $
        
        \item for each $T\in \T^{'}$,  $\#\{J \in \J^{'}(T) \}\sim \beta\lesssim R^{\eta_2}$

        \item for each $Q\in \sQ_X$, $\#\{T\in \T^{'} : Q\subset \bigcup_{J\in \J^{'}(T)}J\}\lesssim m$,

        \item $R^{-\eta_1}\#\sQ_X m\lesssim \beta \nu R^{1/2}\#\T^{'}\lesssim  \#\sQ_X m$.
    \end{enumerate}
\end{lemma}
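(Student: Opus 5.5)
The plan is to run the iterative pigeonholing scheme of Lemma \ref{label:uniformStats}, with tube-cube incidences replaced by segment-cube incidences. Here the objects being pigeonholed are pairs $(T,J)$ with $J\in\J(T)$, not whole tubes. We work with $F_j=\sum_{T\in\T_j}\sum_{J\in\J_j(T)}F_T1_J$. Since the segments $J\in\J(T)$ partition $T$ and $F_T$ is essentially supported on $T$, at the start $F_0=\sum_T F_T$ up to negligible tails.

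\textbf{Initial step.} Set $X_0=B_R$, $\T_0=\T$, $\J_0(T)=\J(T)$, $\beta_0\sim R^{\eta_2}$, $\nu_0=1$ and $m_0=R$. Then
\[
\beta_0\nu_0R^{1/2}\#\T_0\gtrsim I_0,
\]
where $I_j$ denotes the number of incidences between cubes $Q\in\sQ_{X_j}$ and pairs $(T,J)$ with $J\in\J_j(T)$ and $Q\subset J$. We also have $I_0\lesssim \#\sQ_{X_0}m_0$.

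\textbf{Inductive step.} Given $(X_j,\T_j,\J_j)$, we make three dyadic pigeonholing choices, each costing a factor of $\log R$ in $L^p$:
\begin{enumerate}
\item Choose $X_{j+1}\subset X_j$ on which the multiplicity $\#\{T\in\T_j: Q\subset\bigcup_{J\in\J_j(T)}J\}\sim m_{j+1}$. This is done exactly as before, keeping a $(\log R)^{-1}$ fraction of $\|F_j\|_{L^p(X_j)}^p$.
\item For each $T$, split $\J_j(T)$ according to the dyadic value $\nu$ of $\#\sQ_{X_{j+1}}(J)/R^{1/2}$ and discard the empty classes. Since there are $O(\log R)$ dyadic values, we pigeonhole a single $\nu_{j+1}$ globally. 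Concretely, write $F_j1_{X_{j+1}}$ as a sum over $\nu$ of the pieces $\sum_T\sum_{J:\nu(J)=\nu}F_T1_J$ and use the triangle inequality.
\item Pigeonhole the tubes by the dyadic size $\beta_{j+1}$ of the resulting $\#\J_{j+1}(T)$, again by the triangle inequality over dyadic classes.
\end{enumerate}

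After these choices, items (1)--(3) of the lemma hold for the new data. Moreover,
\[
\beta_{j+1}\nu_{j+1}R^{1/2}\#\T_{j+1}\sim I_{j+1}\le \#\sQ_{X_{j+1}}m_{j+1},
\]
because each cube lies in at most $\sim m_{j+1}$ surviving tubes, and a cube lies in at most one segment of a given tube. This is the upper half of (4). If the lower half
\[
R^{-\eta_1}\#\sQ_{X_{j+1}}m_{j+1}\lesssim\beta_{j+1}\nu_{j+1}R^{1/2}\#\T_{j+1}
\]
also holds, we stop.

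\textbf{Termination.} If we do not stop, the new incidence count is at most $R^{-\eta_1}$ times the number of incidences $(Q,T,J)$ with $Q\in \sQ_{X_{j+1}}$ and $J\in\J_j(T)$, which in turn is at most $I_j$. So incidences drop by $R^{-\eta_1}$ per step. Since $1\le I_j\le R^{O(1)}$, there are at most $O(\eta_1^{-1})$ steps, and the total loss is $(\log R)^{O(\eta_1^{-1})}\lessapprox 1$.

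\textbf{Main obstacle.} The delicate point is step (2) and the support bookkeeping. Discarding segments replaces $F_T$ by $F_T1_J$, which is no longer a wave packet, and the discarded segments also change the multiplicity used in step (1). The step (1) count must therefore use the current segment sets $\J_j$. The $L^p$ comparisons then rely only on the triangle inequality over dyadic classes and on the restriction to $X_{j+1}$, not on any Fourier property. A second check is that $\beta\lesssim R^{\eta_2}$, which holds automatically because $\#\J(T)\lesssim R^{\eta_2}$.
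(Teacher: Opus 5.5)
Your proposal is correct and follows the paper's proof essentially step for step. You pigeonhole the cube multiplicity (computed with the current segment sets), then the segment density $\nu$ and segment count $\beta$ (exactly the paper's Lemma \ref{lem:ShortTubeShadingPigeonholing}), stop once the lower bound in (4) holds, and terminate after $O(\eta_1^{-1})$ rounds because the incidence count drops by $R^{-\eta_1}$ each time; the only difference is cosmetic: you skip pigeonholing at step $0$, and your $\nu_0=1$ is inaccurate (a segment meets only about $R^{1/2-\eta_2}$ cubes), but you use it only in a harmless upper bound.
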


\begin{remark}
    Let $\lambda= \beta \nu$. Moreover, call $m^X(Q)=\#\{T\in \T^{'} : Q\subset \bigcup_{J\in \J^{'}(T)}J\}$.
\end{remark}

\begin{proof}
     We will follow an iterative process to generate $X_j, \T_j, \J_j$, $\nu_j$, $\beta_j$, $m_j$ and stop once we reach the desired incidence condition (4). Let $$F_j= \sum_{T\in \T_j}\sum_{J\in \J_j(T)}F_T1_J.$$ 
     
     \textbf{Initial step}: Let $X_0=B_R$ and $m_0=R$. Apply Lemma \ref{lem:ShortTubeShadingPigeonholing} to create $\T_0\subset \T$ and $\J_0(T)\subset \J(T)$ and dyadic numbers $\nu_0, \beta_0$ so that $$\|F\|_{L^p(X_0)}\lessapprox (\log R)^2 \|F_0\|_{L^p(X_0)}.$$ Note $$ \beta_0 \nu_0 R^{1/2}\#\T_0\sim \sum_{T\in \T_0}\sum_{\substack{Q\in \sQ_{X_0}\\ Q\subset \bigcup_{J\in \J_0(T)}}}1=\sum_{Q\in \sQ_{X_0}}\sum_{\substack{T\in \T_0\\ Q\subset \bigcup_{J\in \J_0(T)}}}1\lesssim \#\sQ_{X_0}m_0 .$$ If also we have\begin{equation}\label{eqn:stoppingCondition}
          \#\sQ_{X_0}m_0\lesssim R^{\eta_1}\beta_0\nu_0R^{1/2}\#\T_0,
     \end{equation} we are done by taking $X=X_0$, $\T^{'}=\T_0$, $\J^{'} = \J_0$, $\nu = \nu_0$, $\beta= \beta_0$, and $m=m_0$. If not, we can repeat the pigeonholing process. 
     
     \textbf{Inductive Step}: We will now construct the next generation. Suppose for some $j\ge 0$, we have constructed $X_{j}, \T_j$, $\J_j$, $\nu_j$, $\beta_j$, $m_{j}$. Pigeonhole the cubes to create $X_{j+1}$ so that for each $Q\in X_{j+1}$, we have $$\#\{T\in \T_j : Q\subset \bigcup_{J\in \J_j(T)}J\}\sim m_{j+1},$$ and $$\|F_j\|_{L^p(X_j)}^p\lesssim (\log R) \|F_j\|_{L^p(X_{j+1})}^p.$$ By Lemma \ref{lem:ShortTubeShadingPigeonholing}, there exists $\T_{j+1}\subset \T_j$ and $\J_{j+1}(T)\subset \J_j(T)$ such that $$\|F_{j}\|_{L^p(X_{j+1})}\lesssim (\log R)^2\|F_{j+1}\|_{L^p(X_{j+1})},$$ and we have for each $J\in \J_{j+1}(T)$, $\#\sQ_{X_{j+1}}(J)\sim \nu_{j+1}R^{1/2}$, and for each $T\in \T_{j+1}$, $\#\J_{j+1}(T)\sim \beta_{j+1}$.
     
     As in the construction of the first generation, we have $$ \beta_{j+1}\nu_{j+1}R^{1/2}\#\T_{j+1}\lesssim \#\sQ_{X_{j+1}}m_{j+1}.$$ If also we have \begin{equation}\label{eqn:stoppingConditionGenereal}        \#\sQ_{X_{j+1}}m_{j+1}\lesssim R^{\eta_1}\beta_{j+1}\nu_{j+1}R^{1/2}\#\T_{j+1},
    \end{equation} we may stop with $X=X_{j+1}$, $\T^{'}=\T_{j+1}$, $\J^{'}=\J_{j+1}$, $\nu=\nu_{j+1}$, $\beta = \beta_{j+1}$, and $m=m_{j+1}$. 
    
    \textbf{Termination}: Every iteration which does not terminate satisfies \begin{align*}
     \beta_{j+1}\nu_{j+1}R^{1/2}\#\T_{j+1} &\lesssim R^{-\eta_1}\#\sQ_{X_{j+1}}m_{j+1}\\
    &\sim R^{-\eta_1}\sum_{Q\in \sQ_{X_{j+1}}}\sum_{\substack{T\in \T_{j}\\ Q\subset \bigcup_{J\in \J_{j}(T)}}}1 \\
    &\lesssim R^{-\eta_1}\sum_{T\in \T_j}\sum_{\substack{Q\in \sQ_{X_j}\\ Q\subset \bigcup_{J\in \J_{j}(T)}}}1 \sim R^{-\eta_1} \beta_{j}\nu_{j}R^{1/2}\#\T_{j}. 
    \end{align*} Therefore, if we iterate $K$ times, $$1\lesssim \beta_{K}\nu_{K}R^{1/2}\#\T_{K} \lesssim R^{-K\eta_1}\beta_{0}\nu_{0}R^{1/2}\#\T_{0}\lesssim R^{O(1)-K\eta_1}.$$ Hence, we need to only iterate at most $K\lesssim \eta_1^{-1}$ times before terminating. Since each iteration gives a factor of $(\log R)^{2+\frac{1}{p}}\lesssim (\log R)^3 $, we get $$\|F\|_{L^p(B_R)}\lesssim (\log R)^{O(\eta_1^{-1})}\|F_{j+1}\|_{L^p(Y)}.$$ We may use the fact that $\log R\lesssim_\eta R^{\eta}$ and take $\eta$ sufficiently small compared to $\eta_1$.
\end{proof}

\begin{lemma}\label{lem:ShortTubeShadingPigeonholing}
   Let $\T$ be a collection of tubes, $\J^{'}(T)\subset \J(T)$, and $X\subset B_R$ a union of $R^{1/2}$-cubes. Then, there exist subsets $\T^{'}\subset \T$ and $\J^{''}(T)\subset\J^{'}(T)$ and dyadic numbers $\beta, \nu$ such that $$\|\sum_{T\in \T}\sum_{J\in J^{'}(T)}F_T1_J\|_{L^p(X)}\lesssim (\log R)^2 \|\sum_{T\in \T^{'}}\sum_{J\in J^{''}(T)}F_T1_J\|_{L^p(X)},$$ and \begin{enumerate}
        \item for each $J\in \J^{''}(T)$, $\#\sQ_X(J)\sim \nu R^{1/2} $,
        
        \item for each $T\in \T^{'}$,  $\#\J^{''}(T)\sim \beta\lesssim R^{\eta_2}$.
    \end{enumerate}  
\end{lemma}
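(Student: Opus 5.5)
The plan is two successive dyadic pigeonholings, first on the segments $J$ and then on the tubes $T$. Each costs one factor of $\log R$, which accounts for the $(\log R)^2$ in the statement. The tool is the standard fact that if $G=\sum_{k} G_k$ with $O(\log R)$ summands, then by the triangle inequality some $G_k$ satisfies $\|G\|_{L^p(X)}\le (\#k)\,\|G_k\|_{L^p(X)}$. Throughout, write $G=\sum_{T\in\T}\sum_{J\in\J^{'}(T)}F_T1_J$.

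\textbf{Step 1: pigeonhole the density $\nu$.} For each pair $(T,J)$ with $J\in\J^{'}(T)$, the quantity $\#\sQ_X(J)$ lies between $0$ and $\lesssim R^{1/2}\cdot R^{1/2-\eta_2}$ cubes along $J$ (at most $R^{1/2-\eta_2}$ of them, essentially). Segments with $\#\sQ_X(J)=0$ contribute nothing on $X$ up to negligible tails, so we discard them. We then sort the remaining pairs by the dyadic value $\nu$ with $\#\sQ_X(J)\sim \nu R^{1/2}$; there are only $O(\log R)$ such values. Splitting
$$G=\sum_{\nu}\ \sum_{T}\sum_{J\in\J^{'}(T),\,\#\sQ_X(J)\sim\nu R^{1/2}}F_T1_J,$$
we pick the $\nu$ whose piece has the largest $L^p(X)$ norm. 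Let $\J^{\nu}(T)$ be the surviving segments. This loses one factor of $\log R$.

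\textbf{Step 2: pigeonhole the count $\beta$.} For each $T$, the count $\#\J^{\nu}(T)$ lies between $1$ and $\#\J(T)\lesssim R^{\eta_2}$, since the $R^{1-\eta_2}$-cubes $B$ cut a length-$R$ tube into $O(R^{\eta_2})$ segments. We split $\T$ by the dyadic value $\beta$ with $\#\J^{\nu}(T)\sim\beta$, again into $O(\log R)$ classes, and keep the class with the largest $L^p(X)$ norm. This defines $\T^{'}$ and $\J^{''}(T)=\J^{\nu}(T)$ for $T\in\T^{'}$. It also gives property (2), including $\beta\lesssim R^{\eta_2}$, while property (1) is inherited from Step 1. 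This loses a second factor of $\log R$.

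The only point needing care, and hence the main obstacle, is the order of the two pigeonholings. We must fix $\nu$ first: refining $\J$ after fixing $\beta$ would destroy uniformity of $\#\J^{''}(T)$. Since Step 2 removes whole tubes and never removes individual segments, property (1) survives Step 2. Beyond that, one should remark that the $1_J$ are sharp cutoffs, so the wave packets $F_T1_J$ are only morally localized. This is harmless here because the argument uses only the triangle inequality on finite decompositions, never orthogonality.
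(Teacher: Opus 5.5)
Your proposal is correct and follows essentially the paper's argument: the paper also first splits each $\J^{'}(T)$ into dyadic classes $\J^{'}_\nu(T)$ according to $\#\sQ_X(J)\sim\nu R^{1/2}$, then, for the chosen $\nu$, splits $\T$ according to $\#\J^{'}_\nu(T)\sim\beta$, and pays one factor of $\log R$ for each pigeonholing via the triangle inequality. Your extra remarks (discarding segments that contain no cubes of $X$, and noting that the second step removes only whole tubes, so the $\nu$-uniformity survives) are sound and spell out details the paper leaves implicit.
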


\begin{proof}
    We follow the same pigeonholing of Wang--Wu \cite{wang2024restrictionestimatesusingdecoupling}. Partition $\J^{'}(T)= \bigcup_{\nu}\J^{'}_\nu(T)$ where $\J^{'}_\nu(T) = \{J \in \J^{'}(T) : \#\sQ_X(J)\sim \nu R^{1/2}\}$. For a given $\nu$, partition $\T=\bigcup_{\beta}\T_\beta$ where $\T_\beta = \{T : \#\J^{'}_\nu(T)\sim \beta\}$. Thus, we write \begin{align*}
        \|\sum_{T\in \T}\sum_{J\in J^{'}(T)}F_T1_J\|_{L^p(X)}& = \|\sum_{\nu}\sum_{\beta}\sum_{T\in \T_\beta}\sum_{J\in \J^{'}_\nu(T)}F_T1_J\|_{L^p(X)}\\
        &\lesssim (\log R)^2 \|\sum_{T\in \T_\beta}\sum_{J\in \J^{'}_\nu(T)}F_T1_J\|_{L^p(X)},
    \end{align*} for some pair of dyadics $\beta, \nu$. Let $\T^{'}=\T_\beta$ and $\J^{''}(T)= \J^{'}_\nu(T)$. 
\end{proof}

We will abuse our notation and replace $\T^{'}$ by $\T$ and $\J^{'}$ with $\J$. For each $T\in \T$, define the shading  $$\mathcal{X}(T)= \{Q\in \sQ_X : Q\subset \bigcup_{J\in \J(T)}J\}.$$ From Lemma \ref{label:TwoEndsUniformStats}, $\#\mathcal{X}(T) \sim \lambda R^{1/2}$.  For each $B$ in the covering from Lemma \ref{label:TwoEndsUniformStats}, let $\T_B$ be the collection of tubes $T\in \T$ such that there exists $J\in \J(T)$ with $J\subset B$. Let $$F_B = \sum_{T\in \T_B}F_T.$$ In this notation, Lemma \ref{label:TwoEndsUniformStats} gives $$\|F\|_{L^p(B_R)}^p\lessapprox \sum_B\|F_B\|_{L^p(X\cap B)}^p.$$
We now follow the two-ends dichotomy of  \cite{wang2024restrictionestimatesusingdecoupling}: either $\beta \le R^{\eta_3}$, which is the non-two-ends case, or $\beta\ge R^{\eta_3}$ in which case $$\nu R^{1/2}\sim \#\{Q\in \mathcal{X}(T): Q\subset J\}\sim \beta^{-1}\#\mathcal{X}(T)\le R^{-\eta_3}\#\mathcal{X}(T),$$ for each $J\in \mathcal{J}(T)$. That is, $\mathcal{X}(T)$ is quantitatively two-ends.

\subsection{Non-Two-Ends Scenario}\label{subsec:NonTwoEnds}
We will quickly dispense with the non-two-ends case $\beta \le R^{\eta_3}$. We will need "flat decoupling;" see \cite[Prop. 2.4]{demeter2020smallcapdecouplings} for instance. This is essentially $L^2$ orthogonality interpolated with the $L^\infty$ triangle inequality.

\begin{prop}\label{prop:FlatDec}
    Let $\Omega$ be a rectangular box in $\R^n$ and $\Omega_1, \ldots, \Omega_L$ a partition of $\Omega$ into congruent rectangular boxes that are translates of each other. Then, for $p\ge 2$, $$\|F_\Omega\|_{L^p(\R^n)}\lesssim L^{\frac{1}{2}-\frac{1}{p}}\left(\sum_{i=1}^L\|F_{\Omega_i}\|_{L^p(\R^n)}^2\right)^{1/2}.$$
\end{prop}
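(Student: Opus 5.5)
We prove Proposition \ref{prop:FlatDec} by interpolating two endpoint estimates for the linear operator $(G_1,\dots,G_L)\mapsto \sum_i G_i$, applied to $G_i = F_{\Omega_i}$. Since the $\Omega_i$ partition $\Omega$, we may take the bump functions so that $\psi_\Omega=\sum_i\psi_{\Omega_i}$ on the relevant region, and then $F_\Omega=\sum_i F_{\Omega_i}$.

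\textbf{Endpoint $p=2$.} The supports $\Omega_i$ are essentially disjoint: they have bounded overlap after passing to slight enlargements. By Plancherel,
$$\|F_\Omega\|_{L^2}\lesssim \Big(\sum_i\|F_{\Omega_i}\|_{L^2}^2\Big)^{1/2},$$
with constant $L^0$.

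\textbf{Endpoint $p=\infty$.} The triangle inequality and Cauchy--Schwarz give
$$\|F_\Omega\|_\infty\le \sum_i\|F_{\Omega_i}\|_\infty\le L^{1/2}\Big(\sum_i\|F_{\Omega_i}\|_\infty^2\Big)^{1/2}.$$

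\textbf{Interpolation.} We cannot interpolate directly in $F$, because the map $F\mapsto(F_{\Omega_i})_i$ is not onto arbitrary tuples. Instead we interpolate the operator $T(G_1,\dots,G_L)=\sum_i (G_i)_{\tilde\Omega_i}$, where $\tilde\Omega_i$ is a slight enlargement of $\Omega_i$ and $(G_i)_{\tilde\Omega_i}$ is the Fourier projection with a smooth cutoff equal to $1$ on $\Omega_i$. Then $T(F_{\Omega_1},\dots,F_{\Omega_L})=F_\Omega$.

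At $p=2$, Plancherel and bounded overlap of the $\tilde\Omega_i$ give $\ell^2(L^2)\to L^2$ boundedness with constant $O(1)$. At $p=\infty$, we need $\|(G_i)_{\tilde\Omega_i}\|_\infty\lesssim\|G_i\|_\infty$. This holds because the cutoffs are translates of a single rescaled bump, so their inverse Fourier transforms have $L^1$ norm $O(1)$, uniformly in $i$ and in the box dimensions. This uniformity is where we use that the $\Omega_i$ are congruent translates. It gives $\ell^2(L^\infty)\to L^\infty$ boundedness with constant $L^{1/2}$.

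Riesz--Thorin for mixed-norm spaces ($\ell^2(L^p)$ interpolates as $[\ell^2(L^2),\ell^2(L^\infty)]_\theta=\ell^2(L^p)$ with $\frac1p=\frac{1-\theta}{2}$) then yields the constant $L^{\theta/2}=L^{\frac12-\frac1p}$.

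\textbf{Main obstacle.} The only delicate point is the $L^\infty$ endpoint: the smooth projections must be bounded on $L^\infty$ uniformly. This is resolved by the affine-invariance remark above, since each $\psi_{\tilde\Omega_i}$ is an affine image of a fixed bump and so $\|\check\psi_{\tilde\Omega_i}\|_{L^1}=O(1)$.
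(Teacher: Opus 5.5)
Your proposal is correct and follows the route the paper indicates: the paper gives no proof, only the citation \cite[Prop.~2.4]{demeter2020smallcapdecouplings} and the remark that the result is $L^2$ orthogonality interpolated with the $L^\infty$ triangle inequality, and your operator $T$ on $\ell^2(L^p)$-tuples is the standard rigorous way to carry out that interpolation. Two small corrections. First, the bound $\|\check\psi_{\tilde\Omega_i}\|_{L^1}=O(1)$ holds for arbitrary boxes by affine invariance, so the congruent-translates hypothesis is really used elsewhere: it forces a grid structure, which gives bounded overlap of the enlargements $\tilde\Omega_i$ at the $L^2$ endpoint. Second, smooth $\psi_{\Omega_i}$ summing to $\psi_\Omega$ must be supported on slight enlargements of the $\Omega_i$, so the cutoff on $\tilde\Omega_i$ should equal $1$ on $\operatorname{supp}\psi_{\Omega_i}$ rather than merely on $\Omega_i$.
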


We can now establish the non-two-ends case with induction.

\begin{lemma}\label{lem:NonTwoEndsCase}
    $$\left(\sum_B \|F_{B}\|_{L^p( B)}^p\right)^{\frac{1}{p}}\lesssim R^{\eta_3/2} C(R^{1-\eta_2}) \left(\sum_{S}\|\bS_S F\|_{L^p(w_{B_R})}^2\right)^{1/2}.$$
\end{lemma}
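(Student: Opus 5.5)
Fix one of the cubes $B$ of side $R^{1-\eta_2}$. The plan is to apply the induction hypothesis \eqref{eq:InductionInequality} at scale $R^{1-\eta_2}$ to $F_B$ on $B$, and then pass from the scale-$R^{1-\eta_2}$ strips back to the scale-$R$ strips.

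At scale $R' = R^{1-\eta_2}$, $F_B$ has Fourier support in $\M(R^{-1})\subset \M(R'^{-1})$. Hence
$$\|F_B\|_{L^p(B)}\le C(R')\Big(\sum_{S'}\|\bS'_{S'}F_B\|_{L^p(w_B)}^2\Big)^{1/2}.$$
Here $S'$ ranges over strips of width $R'^{-1/2}$, and $\bS'_{S'}$ is the square function over canonical caps $\theta'$ of size $R'^{-1/2}$.

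Each $S'$ is a union of $\sim R^{\eta_2/2}$ strips $S$ of width $R^{-1/2}$, and each $\theta'\subset S'$ is a union of $\sim R^{\eta_2}$ caps $\theta$, with $\sim R^{\eta_2/2}$ of them in each $\xi$-direction. The main step is to control $\bS'_{S'}$ by the finer square functions $\bS_S$, losing only a factor $R^{\eta_3/2}$. This is where the non-two-ends hypothesis $\beta\le R^{\eta_3}$ must enter. A naive use of flat decoupling (Proposition \ref{prop:FlatDec}) to split each $F_{\theta'}$ into its $F_\theta$ pieces would cost $(R^{\eta_2})^{1/2-1/p}$, which is too large. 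So I expect to use $\beta$ differently.

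The plan is to instead exploit that each tube $T$ meets only $\sim\beta$ of the cubes $B$. I will write $\|F_B\|_{L^p(B)}$ and work at the level of wave packets. Inside $B$, the wave packets of $F_B$ at scale $R'$ are built from scale-$R$ tube segments $J$. By the locally constant property and $L^2$ orthogonality across the $R^{-1/2}$-strips inside $S'$ (a cylindrical decoupling for the parabola in the narrow direction, exactly as in the equivalence between Theorem \ref{thm:stripDec} and Theorem \ref{thm:strongStripDec}), I aim for
$$\|\bS'_{S'}F_B\|_{L^p(w_B)}\lesssim\Big(\sum_{S\subset S'}\|\bS_S F_B\|_{L^p(w_B)}^2\Big)^{1/2}$$
up to $R^{O(\eta)}$ losses. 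The justification is that $\bS'_{S'}F_B$ is a square function in which each $F_{\theta'}$ is a sum of pieces $F_\theta$ for $\theta\subset\theta'$. I would apply the reverse square function estimate for the parabola at $p\le 4$ in the one direction where $\theta'$ is wider, combined with the Rubio de Francia estimate in the other.

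Granting that bound, I sum over $B$. Since $p\ge 2$,
$$\sum_B\Big(\sum_S\|\bS_SF_B\|^2_{L^p(w_B)}\Big)^{p/2}\le\Big(\sum_S\Big(\sum_B\|\bS_SF_B\|_{L^p(w_B)}^p\Big)^{2/p}\Big)^{p/2}$$
by Minkowski's inequality in $\ell^{p/2}$. It then remains to show
$$\sum_B\|\bS_SF_B\|_{L^p(w_B)}^p\lesssim R^{\eta_3 p/2}\|\bS_SF\|^p_{L^p(w_{B_R})}.$$
Pointwise on $B$, $|\bS_SF_B|^2=\sum_{T\in\T_B\cap\T_S}|F_T|^2\le|\bS_SF|^2$, but the weights $w_B$ overlap. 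Each wave packet $T$ is counted in at most $\beta\le R^{\eta_3}$ of the cubes $B$. Using this with $\|\cdot\|_{\ell^{p/2}}\le\|\cdot\|_{\ell^1}$ on the overlapping sum gives a loss of at most $\beta^{1/2}\le R^{\eta_3/2}$ after taking $p$-th roots appropriately.

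The main obstacle is the square-function comparison step above: passing from the coarse strips $S'$ to the fine strips $S$ without a polynomial loss in $R^{\eta_2}$. If the reverse square function argument fails, the fallback is to accept a loss of $R^{O(\eta_2)}$ and absorb it into the $R^{O(\eta_1+\eta_2)}$ term, or equivalently into $C(R^{1-\eta_2})$ at the cost of changing the induction bookkeeping.
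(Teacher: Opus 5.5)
There is a genuine gap, at exactly the step you flagged, and neither your main route nor your fallback closes it.

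\textbf{The comparison step fails.} The inequality
$$\|\bS'_{S'}F_B\|_{L^p(w_B)}\lessapprox\Big(\sum_{S\subset S'}\|\bS_SF_B\|_{L^p(w_B)}^2\Big)^{1/2}$$
does not follow from the parabola reverse square function plus Rubio de Francia. Each $\theta'$ is wider than the caps $\theta$ in both the $\xi_1$ and the $\xi_2$ direction. Separating $F_{\theta'}$ across the strips $S$ (the $\xi_2$ direction) is itself a rescaled strip-decoupling statement, not a cylindrical parabola estimate.

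Worse, the inequality is false up to $R^{o(\eta_2)}$ losses. Let $F$ be a bush made of the $R^{\eta_2}$ caps $\theta$ of a single $\theta'$, with all wave packets through the center of $B$ and $|F_\theta|\approx 1_{T_\theta}$; this is compatible with $\beta=1$.
\begin{itemize}
\item On $B$ the tubes essentially coincide, since they diverge only at distance $R^{1-\eta_2/2}$.
\item $|F|$ is essentially a product of two Dirichlet kernels with $R^{\eta_2/2}$ terms each, so $\|F\|_{L^p(B)}^p\sim R^{2+(p-2)\eta_2}$.
\item On the other side, $\sum_{S}\|\bS_SF\|_{L^p(w_B)}^2\sim R^{\eta_2}R^{(4-2\eta_2)/p}$.
\end{itemize}
So your inequality fails by a factor $R^{\eta_2(\frac12-\frac1p)}$.

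Structurally, your justification would be valid for all $2\le p\le 4$, and it never really uses $\beta\le R^{\eta_3}$. Indeed $\bS_SF_B\lesssim \bS_SF$ essentially pointwise, so your summation over $B$ costs nothing for any $\beta$. Run verbatim with $F$ in place of $F_B$, it would give $C(R)\le R^{o(\eta_2)}C(R^{1-\eta_2})$ for every $F$. That would prove Theorem~\ref{thm:strongStripDec} for all $p\le 4$, contradicting the bush example.

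\textbf{The fallback also fails.} Any factor $R^{c\eta_2}$ multiplying $C(R^{1-\eta_2})$ iterates to $C(R)\lesssim R^{c}$. The additive $R^{O(\eta_1+\eta_2)}$ term comes from the two-ends case and cannot absorb a multiplicative loss.

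\textbf{The missing idea: pay the flat decoupling loss, then recover it by localization.} The paper splits each $S'$ only into its $R^{\eta_2/2}$ strips $S$, at cost $(R^{\eta_2/2})^{\frac12-\frac1p}$. It keeps the functions $(F_B)_S$ rather than square functions on $B$, and proves
$$\Big(\sum_B\|(F_B)_S\|_{L^p(w_B)}^p\Big)^{1/p}\lesssim \beta^{1/2}R^{-\eta_2(\frac{2}{p}-\frac{1}{2})}\|F_S\|_{L^p(w_{B_R})}.$$
This comes from interpolating the operator $G\mapsto\big(\sum_{\theta\subset S}\sum_{T\in\T_B\cap\T_\theta}G_\theta\psi_Tw_B\big)_B$ between two endpoints:
\begin{itemize}
\item $L^2\to L^2(\ell^2)$, with norm $(\beta R^{-\eta_2})^{1/2}$: a length-$R$ tube has only an $R^{-\eta_2}$ fraction of its $L^2$ mass in $B$, and lies in at most $\beta$ of the $\T_B$.
\item $L^4\to L^4(\ell^2)$, with norm $\beta^{1/2}$, via the reverse square function for the parabola and Rubio de Francia.
\end{itemize}
The gain $R^{-\eta_2(\frac{2}{p}-\frac{1}{2})}$ dominates the loss $R^{\eta_2(\frac{1}{4}-\frac{1}{2p})}$ precisely when $p\le \frac{10}{3}$, and $\beta\le R^{\eta_3}$ supplies the factor $R^{\eta_3/2}$. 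This is where both the exponent $\frac{10}{3}$ and the non-two-ends hypothesis genuinely enter, and neither plays a role in your argument.
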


\begin{proof}
    Note that since $F_B$ is a sum of wave packets, we still have $$\supp \wh{F_B}\subset \mathcal{M}(O(R^{-1}))\subset\mathcal{M}(R^{\eta_2-1}).$$ So, we apply the inequality \eqref{eq:InductionInequality} at scale $R^{1-\eta_2}$ to get $$\|F_B\|_{L^p(B)}\le C(R^{1-\eta_2})\left(\sum_{S^{'}}\|(F_B)_{S^{'}}\|_{L^p(w_B)}^2\right)^{1/2},$$ where $S^{'}$ are strips with width $(R^{1-\eta_2})^{-1/2}$. We apply flat decoupling Proposition \ref{prop:FlatDec} to get $$\|(F_B)_{S^{'}}\|_{L^p(w_B)}\lesssim (R^{\eta_2/2})^{\frac{1}{2}-\frac{1}{p}}\left(\sum_{S\subset S^{'}}\|(F_B)_S\|_{L^p(w_B)}^2\right)^{1/2}.$$ This gives $$\|F_B\|_{L^p(B)}\lesssim (R^{\eta_2/2})^{\frac{1}{2}-\frac{1}{p}}C(R^{1-\eta_2})\left(\sum_{S}\|(F_B)_S\|_{L^p(w_B)}^2\right)^{1/2}.$$ By Minkowski's inequality, it suffices to see that for each strip $S$, \begin{equation}\label{eq:BallSum}
        (\sum_{B}\|(F_B)_S\|_{L^p(w_B)}^p)^{1/p}\lesssim \beta^{1/2}(R^{-\eta_2})^{\frac{2}{p}-\frac{1}{2}}\|F_S\|_{L^p(w_{B_R})}.
    \end{equation}
    Lemma \ref{lem:NonTwoEndsCase} follows from \eqref{eq:BallSum} since $\beta \le R^{\eta_3}$ and $$\frac{2}{p}-\frac{1}{2}\ge \frac{1}{4}-\frac{1}{2p} \iff p\le \frac{10}{3}.$$ Now, to prove the inequality \eqref{eq:BallSum}, we first crudely bound $$(\sum_{B}\|(F_B)_S\|_{L^p(w_B)}^p)^{1/p}\sim (\sum_{B}\|(F_B)_Sw_B\|_{L^p(w_{B_R})}^p)^{1/p}\le \|(\sum_B |(F_B)_Sw_B|^2 )^{1/2}\|_{L^p(B_R)}.$$ To estimate this term, we use vector-valued interpolation. For $G$ Schwartz, define the operator $$A_S(G)= (A_{S,B}(G))_B= \left(\sum_{\theta\subset S}\sum_{T\in \T_B\cap \T_\theta}G_\theta \psi_Tw_B\right)_B.$$ We will show that $$\|A_S\|_{L^2\to L^2(\ell^2_B)}\lesssim \beta^{1/2}(R^{-\eta_2})^{1/2},\; \|A_S\|_{L^4\to L^4(\ell^2_B)}\lesssim \beta^{1/2}.$$ By complex interpolation, this gives the desired $$\|A_S\|_{L^p\to L^p(\ell^2)} \lesssim \beta^{1/2}(R^{-\eta_2/2})^{\frac{4}{p}-1}= \beta^{1/2}(R^{-\eta_2})^{\frac{2}{p}-\frac{1}{2}}.$$ First, we have by $L^2$ orthogonality, $$\|(\sum_B |A_{S,B}(G)|^2 )^{1/2}\|_{L^2(B_R)}^2 = \sum_B\|A_{S,B}(G)\|_{L^2(w_B)}^2\lesssim \sum_{B}\sum_{\theta\subset S}\sum_{T\in \T_B\cap \T_\theta}\|G_\theta \psi_T\|_{L^2(w_B)}^2.$$ By the locally constant property, we bound $$\|G_\theta\psi_T\|_{L^2(w_B)}^2\lesssim R^{-\eta_2}\|G_\theta \psi_T\|_{L^2(w_{B_R})}^2.$$ Since each $T\in \T$ belongs to $\T_B$ for at most $\beta$ different $B$, we use $L^2$ orthogonality and Plancherel to get $$\sum_{B}\sum_{\theta\subset S}\sum_{T\in \T_B\cap \T_\theta}\|G_\theta \psi_T\|_{L^2(w_B)}^2\lesssim \beta R^{-\eta_2}\|G\|_2,$$ which completes the $L^2$ estimate. Next, since $w_B$ are essentially disjoint, we have $$\|(\sum_B |A_{S,B}(G)|^2)^{1/2}\|_{L^4(B_R)}^4\lesssim \sum_B\|A_{S,B}(G)\|_{L^4(w_B)}^4 \le \sum_B\|A_{S,B}(G)\|_{L^4(w_{B_R})}^4.$$
    
    By the reverse square-function estimate for the parabola, $$\|A_{S,B}(G)\|_{L^4(w_{B_R})}\lesssim \left\|\left(\sum_{\theta\subset S}\sum_{T\in\T_B\cap \T_\theta}|G_\theta \psi_T|^2\right)^{1/2}\right\|_{L^4(w_{B_R})}.$$ Using again the fact that a tube $T$ may only belong to at most $\beta$ different $\T_B$, we have $$\sum_B \|A_{S,B}(G)\|_{L^4(w_{B_R})}^4\lesssim \beta^2 \left\|\left(\sum_{\theta \subset S}|G_\theta|^2\right)^{1/2}\right\|_{L^4(w_{B_R})}^4.$$ By Rubio de Francia square-function estimate and Young's inequality, $$\left\|\left(\sum_{\theta \subset S}|G_\theta|^2\right)^{1/2}\right\|_{L^4(w_{B_R})}\lesssim \|G_S\|_{L^4(w_{B_R})}\lesssim \|G\|_{4},$$ which completes the $L^4$ estimate. 
\end{proof}

\subsection{Two-Ends Scenario}

For each $Q\subset B_R$ and each strip $S$, let $$m^X(Q)= \#\{T\in \T : Q\in \mathcal{X}(T)\},$$ and $$m_S^X(Q)=\#\{T\in \T_S: Q\in \mathcal{X}(T)\}.$$
\begin{prop}\label{prop:LpLHS}
For each $B$, $$\|F_B\|_{L^p(X\cap B)}\lessapprox aR^{\frac{3}{2p}+\eta_1}\lambda^{-\frac{1}{2}+\frac{1}{p}}(\#\sQ_X)^{\frac{1}{p}-\frac{1}{2}}\left(\sum_{S}\sum_{Q\in \sQ_X}m_S^X(Q)\right)^{1/2}.$$    
\end{prop}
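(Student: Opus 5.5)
This mirrors the proof of Proposition \ref{prop:LpLHS3}, now localized to a single $R^{1-\eta_2}$-cube $B$. Throughout, $F_B$ is understood as $\sum_{T\in\T_B}F_T1_{J}$, restricted to the segments $J\subset B$. On $X\cap B$, every cube $Q$ sees only the wave packets $T$ with $Q\in\mathcal{X}(T)$. By item (3) of Lemma \ref{label:TwoEndsUniformStats}, the number of such $T$ is at most $m$.

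\textbf{Step 1: interpolate between $L^2$ and $L^4$.} By H\"older's inequality,
$$\|F_B\|_{L^p(X\cap B)}\le \|F_B\|_{L^2(X\cap B)}^{\frac4p-1}\|F_B\|_{L^4(X\cap B)}^{2-\frac4p}.$$
For the $L^2$ term, use local $L^2$ orthogonality on each $Q$:
$$\|F_B\|_{L^2(X\cap B)}^2\lesssim \#\sQ_{X\cap B}\, m\, a^2R^{3/2}\le \#\sQ_X\, m\, a^2R^{3/2}.$$
For the $L^4$ term, apply refined decoupling, Theorem \ref{thm:refinedDecoupling}, with multiplicity $m$. Each truncated packet $F_T1_J$ has $\|F_T1_J\|_4^4\lesssim a^4R^{2-\eta_2}\le a^4R^2$. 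This gives
$$\|F_B\|_{L^4}^4\lessapprox m\,\#\T\,a^4R^2.$$
Item (4) gives $\#\T\lesssim \#\sQ_X\, m\,(\lambda R^{1/2})^{-1}R^{\eta_1}$, so
$$\|F_B\|_{L^4}^4\lessapprox R^{\eta_1}\lambda^{-1}m^2\,\#\sQ_X\, a^4R^{3/2}.$$

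\textbf{Step 2: combine the two bounds.} Exactly as in the earlier computation, this yields
$$\|F_B\|_{L^p(X\cap B)}\lessapprox R^{\eta_1}\lambda^{-\frac12+\frac1p}m^{1/2}(\#\sQ_X)^{1/p}aR^{\frac{3}{2p}}.$$
The $R^{\eta_1}$ factor appears with exponent at most $1$.

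\textbf{Step 3: replace $m$ by an incidence count.} Use the lower half of item (4):
$$m\#\sQ_X\lesssim R^{\eta_1}\lambda R^{1/2}\#\T=R^{\eta_1}\sum_{Q\in\sQ_X}m^X(Q).$$
Hence $m^{1/2}(\#\sQ_X)^{1/p}\lesssim R^{\eta_1/2}(\#\sQ_X)^{\frac1p-\frac12}\bigl(\sum_Q m^X(Q)\bigr)^{1/2}$. Since the strips partition the tubes, $m^X(Q)=\sum_S m^X_S(Q)$, and the sums can be exchanged. The total power of $R^{\eta_1}$ is absorbed into the stated $R^{\eta_1}$ after adjusting constants, or into $\lessapprox$ if $\eta_1$ is treated as a harmless loss.

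\textbf{Main obstacle.} The delicate point is the $L^4$ step. Refined decoupling must be applied to a function built from truncated wave packets $F_T1_J$, whose Fourier support is no longer exactly in $\M(R^{-1})$. The fix I would try first is to replace $1_J$ with a smooth cutoff adapted to $B$. This keeps each piece a wave packet at scale $R$ up to harmless tails, so that Theorem \ref{thm:refinedDecoupling} applies with the multiplicity bound $m$ on $X\cap B$.
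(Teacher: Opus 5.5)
Your argument is correct and is essentially the paper's proof. The steps match: H\"older between $L^2$ and $L^4$, local orthogonality and refined decoupling with multiplicity $m$ on $X\cap B$, the bound $\lambda R^{1/2}\#\T\lesssim m\#\sQ_X$ from item (4) to control the $L^4$ term, and finally the reverse inequality $m\#\sQ_X\lesssim R^{\eta_1}\sum_S\sum_{Q}m_S^X(Q)$. That first inequality of item (4) carries no $R^{\eta_1}$ loss, so the extra factor in your Step 1 is unnecessary, though harmless since your total exponent is $\eta_1(1-\frac1p)$.

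Your ``main obstacle'' does not actually arise. The paper takes $F_B=\sum_{T\in\T_B}F_T$ with no truncation. On $B$ this agrees with your truncated sum up to tails, and it already has Fourier support in $\M(O(R^{-1}))$, so refined decoupling applies directly. No cutoff is needed. Moreover, a cutoff adapted to $B$ at scale $R^{1-\eta_2}$ would spread the Fourier support to width $R^{\eta_2-1}$, so the pieces would no longer be scale-$R$ wave packets as you claim.
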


\begin{proof}
    From Lemma \ref{label:TwoEndsUniformStats}, $$R^{-\eta_1}m\#\sQ_X\lesssim\lambda R^{1/2}\#\T\lesssim m\#\sQ_X.$$ By H\"{o}lder's inequality, $$\|F_B\|_{L^p(X\cap B)}\le \|F_B\|_{L^2(X\cap B)}^{\frac{4}{p}-1}\|F_B\|_{L^4(X\cap B)}^{2-\frac{4}{p}}.$$
By $L^2$ orthogonality and $L^4$ refined decoupling Theorem \ref{thm:refinedDecoupling}, \begin{align*}
    \|F_B\|_{L^2(X\cap B)}^2 &= \sum_{Q\in \sQ_X, Q\subset B}\|F_B\|_{L^2(Q)}^2\\&\lesssim \sum_{Q\in \sQ_X, Q\subset B}\sum_{T\in \T_B}\|F_T\|_{L^2(Q)}^2\lesssim \#\sQ_X m a^2R^{3/2},
\end{align*}
and $$\|F_B\|_{L^4(X\cap B)}^4 \lessapprox m\sum_{T\in \T_B}\|F_T\|_{L^4(B_R)}^4\lesssim m \#\T a^4R^{2}\lesssim \lambda^{-1}m^2\#\sQ_X a^4R^{3/2}.$$ Therefore, we get \begin{align*}
    \|F_B\|_{L^p(X\cap B)}&\lessapprox (\#\sQ_X m a^2R^{3/2})^{\frac{1}{2}(\frac{4}{p}-1)}(\lambda^{-1}m^2\#\sQ_X a^4R^{3/2})^{\frac{1}{4}(2-\frac{4}{p})}\\
    &= \lambda^{-\frac{1}{2}+\frac{1}{p}}m^{\frac{1}{2}}(\#\sQ_X)^{\frac{1}{p}}aR^{\frac{3}{2p}}.
\end{align*}
Note that $$m\#\sQ_X \lesssim R^{\eta_1}\lambda R^{1/2}\#\T =R^{\eta_1}\sum_{Q\in \sQ_X}m^X(Q)=R^{\eta_1}\sum_{S}\sum_{Q\in \sQ_X}m_S^X(Q).$$ This gives \begin{align*}
    \|F_B\|_{L^p(X\cap B)}&\lessapprox R^{\eta_1}\lambda^{-\frac{1}{2}+\frac{1}{p}}(\#\sQ_X)^{\frac{1}{p}-\frac{1}{2}}aR^{\frac{3}{2p}}\left(\sum_{S}\sum_{Q\in \sQ_X}m_S^X(Q)\right)^{1/2}.
\end{align*}
\end{proof}

\begin{corollary}\label{cor:MultMomentReduction10/3}
    Suppose for each strip $S$,
    \begin{equation}\label{eqn:StripwiseIncidenceMoment10/3}
        \left(\sum_{Q\in \sQ_X}m_S^X(Q)\right)^{\frac{p}{2}}\lessapprox \lambda^{\frac{p}{2}-1}(\#\sQ_X)^{\frac{p}{2}-1}\sum_{Q\subset B_R}m_S(Q)^{p/2}.
    \end{equation}
    Then, $$C(R)\lessapprox C_{\eta_1, \eta_2,\eta_3} R^{\eta_3}C(R^{1-\eta_2})+R^{O(\eta_1+\eta_2)}.$$
\end{corollary}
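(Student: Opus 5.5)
The plan is to split according to the two-ends dichotomy for $\beta$. We start from Lemma \ref{label:TwoEndsUniformStats}, which gives
$$\|F\|_{L^p(B_R)}^p\lessapprox \sum_B \|F_B\|_{L^p(X\cap B)}^p .$$

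\textbf{Non-two-ends case, $\beta\le R^{\eta_3}$.} Lemma \ref{lem:NonTwoEndsCase} applies directly. It bounds $(\sum_B\|F_B\|_{L^p(B)}^p)^{1/p}$ by $R^{\eta_3/2}C(R^{1-\eta_2})(\sum_S\|\bS_SF\|_{L^p(w_{B_R})}^2)^{1/2}$. This produces the first term $C_{\eta_1,\eta_2,\eta_3}R^{\eta_3}C(R^{1-\eta_2})$, where the $\lessapprox$ losses from pigeonholing are absorbed into the constant.

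\textbf{Two-ends case, $\beta\ge R^{\eta_3}$.} Here we combine Proposition \ref{prop:LpLHS}, the hypothesis \eqref{eqn:StripwiseIncidenceMoment10/3}, and Lemma \ref{lem:LpRHS}. Raising Proposition \ref{prop:LpLHS} to the power $p$ and summing over $B$ costs only the number of $B$'s, which is $R^{3\eta_2}$; this is absorbed into $R^{O(\eta_1+\eta_2)}$. For each strip, the hypothesis gives
$$\sum_{Q\in\sQ_X}m_S^X(Q)\lessapprox \lambda^{1-\frac2p}(\#\sQ_X)^{1-\frac2p}\Big(\sum_{Q}m_S(Q)^{p/2}\Big)^{2/p}.$$
Summing over $S$ and taking square roots, the factor $\lambda^{\frac12-\frac1p}(\#\sQ_X)^{\frac12-\frac1p}$ cancels exactly against the prefactor $\lambda^{-\frac12+\frac1p}(\#\sQ_X)^{\frac1p-\frac12}$ in Proposition \ref{prop:LpLHS}. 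What remains is
$$\|F_B\|_{L^p(X\cap B)}\lessapprox R^{\eta_1}aR^{\frac{3}{2p}}\Big(\sum_S\Big(\sum_Q m_S(Q)^{p/2}\Big)^{2/p}\Big)^{1/2}.$$
By Lemma \ref{lem:LpRHS}, exactly as in \eqref{eqn:RHS}, the right side is at most $R^{\eta_1}(\sum_S\|\bS_SF\|_{L^p(w_{B_R})}^2)^{1/2}$. This gives the second term $R^{O(\eta_1+\eta_2)}$.

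\textbf{Bookkeeping.} Two points need care.

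First, Lemma \ref{lem:LpRHS} must be applied to the original function, or to the refined one with the inequality going in the right direction. Since $|F_T|\sim a$ on $T$ for every retained wave packet, and the $m_S(Q)$ count the retained tubes, we have $\|\bS_S\tilde F\|_p\lesssim\|\bS_SF\|_{L^p(w_{B_R})}$ pointwise up to Schwartz tails. This follows because discarding wave packets decreases the square function, which is the monotonicity used implicitly in Section \ref{sec:p<3}.

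Second, the $m_S(Q)$ in the hypothesis counts full-tube incidences for tubes in $\T_S$, while $m_S^X$ uses the shading. This is the orientation the hypothesis is stated in, so no conversion is needed.

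Finally, take the maximum over the two cases and over the sup defining $C(R)$.

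The main obstacle is the uniformity of constants. The $\lessapprox$ losses from Lemma \ref{label:TwoEndsUniformStats} depend on $\eta_1$. They must appear as an $\eta$-dependent constant multiplying $C(R^{1-\eta_2})$, rather than as an $R$-power that would spoil the induction. To ensure this, we would choose the pigeonholing's logarithmic losses as $(\log R)^{O(\eta_1^{-1})}\le C_{\eta_1}R^{\eta_3/2}$, and account for the $R^{3\eta_2}$ balls in the $\ell^p$ sum carefully.
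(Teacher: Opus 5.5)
Your proposal is correct and follows the paper's proof. Like the paper, you split on $\beta$: the non-two-ends case $\beta\le R^{\eta_3}$ goes through Lemma \ref{lem:NonTwoEndsCase}, and the two-ends case combines Proposition \ref{prop:LpLHS}, the hypothesis, and Lemma \ref{lem:LpRHS}, with your extra bookkeeping making explicit what the paper's one-line proof leaves implicit: the $(\#B)^{1/p}=R^{3\eta_2/p}$ loss from summing over the $R^{1-\eta_2}$-cubes, the monotonicity of $\bS_S$ under discarding wave packets, and the absorption of the $(\log R)^{O(\eta_1^{-1})}$ pigeonholing losses.
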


\begin{proof}
    We apply Proposition \ref{prop:LpLHS} and Lemma \ref{lem:LpRHS} to finish the two-ends case. For the non-two-ends case, we use Lemma \ref{lem:NonTwoEndsCase}.
\end{proof}

Therefore, to prove Theorem \ref{thm:strongStripDec}, it suffices to show \eqref{eqn:StripwiseIncidenceMoment10/3} given the tube-cube statistics from Subsection \ref{subsect:wavepackets}, namely $\#\mathcal{X}(T)\sim \lambda R^{1/2}$ and two-ends for each $T\in \T$.  

\section{Strip Incidence Geometry}\label{sec:StripTubeGeo}
The estimate \eqref{eqn:StripwiseIncidenceMoment10/3} follows from this incidence result. Given Theorem \ref{thm:multMoment}, we can finish the proof of Theorem \ref{thm:strongStripDec}.
\begin{theorem}\label{thm:multMoment}
    Let $\T$ be a collection of tubes in $\R^3$ from a strip $S$ and $X$ the union of $R^{1/2}$-cubes in $B_R$ from a collection $\sQ$ such that $\#\mathcal{X}(T)\sim \lambda R^{1/2}$ and $$\#\{Q\in\mathcal{X}(T): Q\subset J\}\lesssim R^{-\eta_3}\#\mathcal{X}(T)$$ for each $T\in \T$ and $J\in \mathcal{J}(T)$. Then, for each $2< p\le 4$, $$\left(\sum_{Q\in \sQ_X}m_S^X(Q)\right)^{\frac{p}{2}}\lessapprox R^{\eta_2}\lambda^{\frac{3}{2}-\frac{p}{4}}(\#\sQ_X)^{\frac{p}{2}-1}\sum_{Q\subset B_R}m_S(Q)^{p/2}.$$
\end{theorem}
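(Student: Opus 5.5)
The plan is to follow the structure of the proof of Theorem \ref{thm:multMomentKakeya}, but to replace the planar Kakeya (hairbrush) step with a two-ends Furstenberg estimate. This should improve the gain from $\lambda^{2-\frac{p}{2}}$ to $\lambda^{\frac{3}{2}-\frac{p}{4}}$.

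\textbf{Step 1 (remove $\#\sQ_X$).} By H\"older,
$$\Big(\sum_{Q\in\sQ_X}m^X_S(Q)\Big)^{p/2}\le (\#\sQ_X)^{\frac p2-1}\sum_{Q\in\sQ_X}m_S^X(Q)^{p/2}.$$
So it suffices to prove the moment bound
$$\sum_{Q\in \sQ_X}m_S^X(Q)^{p/2}\lessapprox R^{\eta_2}\lambda^{\frac32-\frac p4}\sum_{Q\subset B_R}m_S(Q)^{p/2}.$$
This is the two-ends analogue of Theorem \ref{thm:multMomentKakeya}. As a sanity check, at $p=2$ the exponent is $1$, and the bound is exactly the double count $\sum_{Q}m^X_S(Q)=\lambda R^{1/2}\#\T\sim\lambda\sum_Q m_S(Q)$.

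\textbf{Step 2 (reduce to the plane).} As in Section \ref{sec:p<3}, the tubes of $\T_S$ have directions lying essentially in one plane. Tile $B_R$ by $R\times R\times R^{1/2}$ plates $U$. Each tube lies essentially in one plate, and both sides of the moment bound split as sums over plates. It therefore suffices to prove the planar statement. In that statement, $\T$ is a family of $R^{1/2}\times R$ tubes in $B_R\subset\R^2$. Each $T$ carries a shading $\mathcal X(T)$ by $R^{1/2}$-squares with $\#\mathcal X(T)\sim\lambda R^{1/2}$, and the shading is two-ends: no $R^{1-\eta_2}$-segment $J$ carries more than $R^{-\eta_3}\#\mathcal X(T)$ squares.

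\textbf{Step 3 (the planar estimate).} First I would dyadically pigeonhole the multiplicity $m^X(Q)\sim\mu$ on the shaded squares, and the full multiplicity $m(Q)$ as needed, at a $\log$ cost. Writing $q=\frac p2-1\in(0,1]$, I would then expand as in the proof of Theorem \ref{thm:planarIncidenceKakeya}:
$$\sum_{Q}m^X(Q)^{p/2}=\sum_T\sum_{Q\in\mathcal X(T)}m^X(Q)^{q}.$$
The Kakeya argument then applies H\"older along each tube and bounds the shaded hairbrush $\sum_{Q\in\mathcal X(T)}m^X(Q)$ by fat-tube counts $M(P)$ at scale $R\rho^{-1}$.

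Here two-endedness must enter. A pair $T,T'$ with $c(T,T')\sim\rho$ large forces the shaded incidences of $T$ to concentrate in a segment of length about $R/\rho$. Two-endedness caps the fraction of $\mathcal X(T)$ in any such segment of length at least $R^{1-\eta_2}$. Consequently, the bush configuration of Figure \ref{fig:BushExample}, which is sharp for Theorem \ref{thm:planarIncidenceKakeya}, cannot occur at scales above $R^{\eta_2}$. The losses at smaller $\rho$ are absorbed into the $R^{\eta_2}$ factor.

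For the core estimate, I would invoke the planar two-ends Furstenberg estimate as used by Wang--Wu, in the multiplicity form
$$\Big\|\sum_T 1_{\mathcal X(T)}\Big\|_{L^{r}}^{r}\lessapprox \lambda^{\,\cdot}\,\Big\|\sum_T1_T\Big\|_{L^r}^{r}.$$
This compares the shaded multiplicity moment to the unshaded one with a gain of $\lambda^{1/2}$ in place of the Kakeya gain $\lambda^0$ at the $L^2$ endpoint. Combining it with the trivial $p=2$ identity from Step 1 via the H\"older/hairbrush bookkeeping of Theorem \ref{thm:planarIncidenceKakeya} should give the exponent
$$\tfrac32-\tfrac p4=(1-\theta)\cdot 1+\theta\cdot\tfrac12,\qquad \theta=\tfrac{p-2}{2}.$$

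\textbf{Main obstacle.} The hard step is Step 3: turning two-endedness of the shadings into the $\lambda^{1/2}$ gain at $p=4$, uniformly in the pigeonholed parameters $\rho$ and $\mu$. The difficulty is also in the direction of comparison. The right-hand side involves the full, unshaded multiplicities $m(Q)$, so one cannot simply interpolate two moment bounds. I would handle this with the fat-tube decomposition used in the proof of Theorem \ref{thm:planarIncidenceKakeya}: apply the two-ends Furstenberg bound inside each fat tube $P$ after rescaling, with $P$ mapped to a square, and then sum using $\sum_P m_P(Q)^{p/2}\le m(Q)^{p/2}$. The rescaling must preserve two-endedness up to an $R^{O(\eta_2)}$ loss, which is the source of the $R^{\eta_2}$ factor in the statement.
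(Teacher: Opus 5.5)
There is a genuine gap, and it is in Step 1. The H\"older inequality is valid, but the moment bound you reduce to,
\[
\sum_{Q\in\sQ_X}m^X(Q)^{p/2}\lessapprox R^{\eta_2}\lambda^{\frac32-\frac p4}\sum_{Q\subset B_R}m(Q)^{p/2},
\]
is false under the hypotheses of the theorem, so no Step 3 can prove it.

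Here is a counterexample. Take $N$ planar tubes through the origin with consecutive directions, $R^{\eta_2}\le N\le R^{1/2}$; this fits inside one plate of a strip. Shade each tube by two sets of squares:
\begin{itemize}
\item the $\sim R^{1/2}/N$ squares within distance $R/N$ of the origin, which are essentially common to all $N$ tubes;
\item $\sim R^{\eta_3}R^{1/2}/N$ squares at distance $\ge R/4$, spread over $\sim R^{\eta_3}$ different segments $J$.
\end{itemize}
This shading is two-ends with $\lambda\sim R^{\eta_3}/N$. Your left side is at least $(R^{1/2}/N)N^{p/2}=R^{1/2}N^{\frac p2-1}$. The bush computation after Theorem \ref{thm:planarIncidenceKakeya} gives $\sum_Q m(Q)^{p/2}\lessapprox NR^{1/2}$, so your right side is $\approx R^{O(\eta_2)}N^{\frac p4-\frac12}R^{1/2}$. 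You lose a factor $N^{\frac p4-\frac12}$, which is $R^{1/6}$ at $p=10/3$, $N=R^{1/2}$.

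The same example refutes the ``multiplicity form'' of two-ends Furstenberg with a $\lambda^{1/2}$ gain at $p=4$ (the loss there is $N^{1/2}$). It also refutes your heuristic that two-endedness excludes the bush. Two-endedness only caps the fraction of each tube's shading that lies in the bush. It does not stop a few bush squares from dominating the $L^{p/2}$-moment of $m^X$. Pigeonholing $m^X(Q)\sim\mu$ would select exactly those squares and destroy the two-ends hypothesis.

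The stated inequality survives this example. There $(\sum_Q m^X(Q))^{p/2}=(R^{\eta_3}R^{1/2})^{p/2}$ is small while $\#\sQ_X\approx R^{\eta_3}R^{1/2}$ is large. Your moment form would be equivalent to the statement only if $m^X$ were essentially constant on $\sQ_X$, and that is not assumed.

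So you must keep the factor $(\#\sQ_X)^{\frac p2-1}$. It is exactly where the Furstenberg estimate enters, as a \emph{lower} bound for $\#\sQ_X$. Since $\sum_{Q\in\sQ_X}m^X(Q)\sim\lambda R^{1/2}\#\T$, the planar statement is a bound on $(\#\T)^{p/2}$. Returning from plates to the strip then takes a H\"older step over plates, not just a sum.

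The paper pigeonholes tubes by their Katz--Tao constant $K$, the worst ratio $M(P_\rho)\rho R^{-1/2}$ over fat tubes $P_\rho\supset T$. It then proves two bounds for $\T_K$:
\begin{itemize}
\item splitting $\T_K$ into $\sim K$ Katz--Tao subfamilies and applying Wang--Wu gives $\#\T_K\lessapprox KR^{-\frac12+\frac{\eta_2}{2}}\lambda^{-3/2}\#\sQ_X$;
\item the fat-tube counting from the proof of Theorem \ref{thm:planarIncidenceKakeya} gives $\#\T_K\lessapprox K^{1-\frac p2}R^{-1/2}\sum_Q m(Q)^{p/2}$.
\end{itemize}
Multiplying the $(\frac p2-1)$-th power of the first bound by the second eliminates $K$ and gives the claim. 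No rescaling of fat tubes is needed, and the $R^{\eta_2}$ factor is simply the loss in Wang--Wu.
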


\begin{proof}[Proof of Theorem \ref{thm:strongStripDec}]
    We apply Theorem \ref{thm:multMoment} to Corollary \ref{cor:MultMomentReduction10/3} and note that $$\lambda^{\frac{3}{2}-\frac{p}{4}}\lesssim \lambda^{\frac{p}{2}-1} \iff p\le \frac{10}{3}.$$ Take $\eta_1\sim \epsilon$, $\eta_2=\epsilon^2$ and $\eta_3=\epsilon^3$. Now, by iterating the inequality $$C(R)\lessapprox C_\epsilon R^{\epsilon^3}C(R^{1-\epsilon^2})+R^{\epsilon},$$ we get $C(R)\lesssim_\epsilon R^{\epsilon}.$
\end{proof}

Now, we dedicate the rest of this section to proving Theorem \ref{thm:multMoment}. As in Section \ref{sec:p<3}, we may reduce to the following planar problem. 

\begin{theorem}\label{thm:planarIncidence}
    Let $\T$ be a collection of tubes in $\R^2$ and $\mathcal{X}(T)$ be a shading on each $T\in \T$ such that $\#\mathcal{X}(T)\sim \lambda R^{1/2}$ and $$\#\{Q\in\mathcal{X}(T): Q\subset J\}\lesssim R^{-\eta_3}\#\mathcal{X}(T)$$ for each $T\in \T$ and $J\in \mathcal{J}(T)$. Let $\sQ_X = \bigcup_{T\in \T}\mathcal{X}(T)$. Then, for each $2< p\le 4$, $$\left(\sum_{Q\in \sQ_X}m^X(Q)\right)^{p/2}\lessapprox R^{\eta_2}\lambda^{\frac{3}{2}-\frac{p}{4}}(\#\sQ_X)^{\frac{p}{2}-1}\sum_{Q\subset B_R}m(Q)^{p/2},$$ where $m^X(Q)= \#\{T\in \T: Q\in \mathcal{X}(T)\}$ and $m(Q)= \#\{T\in \T: Q \text{ intersects } T\}.$
\end{theorem}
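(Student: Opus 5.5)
The plan is to interpolate between the trivial endpoint $p=2$ and a two-ends estimate at the endpoint $p=4$. The Kakeya-type Theorem \ref{thm:planarIncidenceKakeya} would give a factor $\lambda^{2-\frac p2}$. For $p>2$ this is worse than the claimed $\lambda^{\frac32-\frac p4}$, because $\lambda\le 1$ and $2-\frac p2<\frac32-\frac p4$. So the two-ends hypothesis has to be used exactly to beat the bush example of Figure \ref{fig:BushExample}.

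Write $I=\sum_{Q\in\sQ_X}m^X(Q)=\sum_T\#\mathcal X(T)\sim\lambda R^{1/2}\#\T$, and set $q=\frac p2\in(1,2]$. The target is then
$$I^q\lessapprox R^{\eta_2}\lambda^{\frac32-\frac q2}(\#\sQ_X)^{q-1}\sum_{Q\subset B_R}m(Q)^q .$$
\begin{itemize}
\item At $q=1$ this is immediate, since $m^X\le m$.
\item At $q=2$ it reads $I^2\lessapprox R^{\eta_2}\lambda^{1/2}\#\sQ_X\sum_Q m(Q)^2$.
\end{itemize}
Since $\lambda^{\frac32-\frac q2}=\lambda^{(q-1)\cdot\frac12}\cdot\lambda^{(2-q)\cdot 1}$ up to the trivial factor, I would first try to prove the $q=2$ case together with a ``mixed'' form that allows Hölder interpolation. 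Concretely, I would write $I^q=I^{2-q}\,(I^2)^{q-1}$ and bound $I\le\sum_Q m(Q)$ and $\sum_Q m(Q)\le(\#\{Q:m(Q)\ge1\})^{1-1/q}(\sum m^q)^{1/q}$. This reduces everything to a single $L^2$-type incidence bound, with the exponents checked at the end.

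For the $q=2$ bound, expand $\sum_Q m(Q)^2=\sum_{T,T'}c(T,T')$ as in the hairbrush computation of Theorem \ref{thm:planarIncidenceKakeya}. Dyadically pigeonhole the angle $\rho^{-1}$ between pairs, and group tubes into the fat $R\times R\rho^{-1}$ tubes $P$ with multiplicities $M(P)$. This gives $\sum_Q m(Q)^2\gtrsim R^{1/2}\sum_P\rho\, M(P)^2$ for the dominant $\rho$. On the other side, $I^2\le\#\sQ_X\sum_Q m^X(Q)^2$ by Cauchy–Schwarz. So it suffices to show the two-ends $L^2$ bound
$$\sum_{Q\in\sQ_X}m^X(Q)^2\lessapprox R^{\eta_2}\lambda^{1/2}\sum_Q m(Q)^2 .$$
To prove it, I would write $\sum_Q m^X(Q)^2=\sum_T\sum_{Q\in\mathcal X(T)}m^X(Q)$. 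For each $T$, the hairbrush $\{T'\}$ meeting $\mathcal X(T)$ splits by angle $\rho^{-1}$. After parabolic/anisotropic rescaling of each fat tube $P$ to a square, the tubes of $P$ become $\delta$-tubes in the plane with $\delta=(R^{1/2}\rho^{-1})^{-1}$-type parameters. The shadings become two-ends shadings of density $\lambda$, because the two-ends condition at scale $R^{1-\eta_2}$ survives up to the $R^{\eta_2}$ loss. Then I would apply the planar two-ends Furstenberg estimate (Wang–Wu, Fu–Ren): for two-ends $\lambda$-dense shadings, $\|\sum_T 1_{\mathcal X(T)}\|_2^2$ is controlled by $\lambda^{1/2}$ times the corresponding unshaded quantity. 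This replaces the factor $\lambda$ lost in the Kakeya proof by $\lambda^{1/2}$, which is exactly the gain from the bush example ($\lambda\sim 1/N$ two-ended forces $N\lesssim R^{\eta}$).

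The main obstacle is the rescaling step. The fat tubes $P$ for different $\rho$ produce shadings that are two-ended only at the coarser scale, and $\lambda$ may change under rescaling, since the density of $\mathcal X(T)\cap P$ relative to $P$-geometry is not uniform. I would handle this by a further dyadic pigeonholing of the density inside each $P$, and by using the Furstenberg estimate in its form allowing arbitrary two-ends shadings with an $R^{\eta_2}$ (or $R^{O(\eta_3)}$) loss. Finally, I would recombine the $\rho$-pieces with $\lessapprox 1$ loss and perform the Hölder interpolation in $q$ to conclude the claimed exponent $\lambda^{\frac32-\frac p4}$.
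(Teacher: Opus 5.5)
Your reduction to an $L^2$ estimate breaks down. After Cauchy--Schwarz you need $\sum_{Q\in\sQ_X}m^X(Q)^2\lessapprox R^{\eta_2}\lambda^{1/2}\sum_Q m(Q)^2$, and this inequality is false: two-endedness of each individual shading does not prevent a few squares of very large shaded multiplicity. Take $\sim R^{1/2}$ tubes with $R^{-1/2}$-separated directions through a common square $Q_0$; this family is even Katz--Tao with constant $O(1)$. Let $\mathcal{X}(T)$ consist of $Q_0$ together with $\sim\lambda R^{1/2}$ squares spread evenly over the part of $T$ at distance $\gtrsim R$ from $Q_0$, with $\lambda=R^{-1/4}$; the two-ends hypothesis holds. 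Then $m^X(Q_0)\sim R^{1/2}$ gives $\sum_{Q\in\sQ_X}m^X(Q)^2\gtrsim R$. On the other hand, $m(Q)\sim R/r$ at distance $r$ from $Q_0$, so $\sum_Q m(Q)^2\sim R\log R$, and the claimed gain $\lambda^{1/2}=R^{-1/8}$ is impossible.

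The theorem survives this example only because $\#\sQ_X\sim\lambda R$: the right-hand side $\lambda^{1/2}\#\sQ_X\sum_Q m(Q)^2\sim\lambda^{3/2}R^2\log R$ still dominates $I^2\sim\lambda^2R^2$. The two-ends Furstenberg estimate is a lower bound on the size $\#\sQ_X$ of the union of the shadings, not an $L^2$ bound for $\sum_T 1_{\mathcal{X}(T)}$. Your step $I^2\le\#\sQ_X\sum_Q m^X(Q)^2$ throws away exactly what that estimate controls. The anisotropic rescaling you flag as the main obstacle cannot repair this. (Separately, Cauchy--Schwarz inside a fat tube gives $\sum_Q m(Q)^2\gtrsim\rho\sum_P M(P)^2$, with no factor $R^{1/2}$.)

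The interpolation in $q$ does not close either. You need a lower bound on $\sum_Q m(Q)^q$, but H\"older gives $\sum_Q m(Q)^q\le(\sum_Q m(Q))^{2-q}(\sum_Q m(Q)^2)^{q-1}$, which is the wrong direction. Combining your inequality for $\sum_Q m(Q)$ with $\sum_Q m(Q)^2\le(\sum_Q m(Q)^q)^{2/q}$ leaves a power loss $(\#\{Q:m(Q)\ge 1\})^{(2-q)(q-1)/q}$. (At $q=1$ you also need $I\lesssim\lambda\sum_Q m(Q)$. This follows from $I\sim\lambda R^{1/2}\#\T$ and $\sum_Q m(Q)\sim R^{1/2}\#\T$, not from $m^X\le m$.)

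The missing idea is to work directly at exponent $q$ and interpolate two upper bounds for $\#\T$ that share a parameter.
\begin{enumerate}
\item Pigeonhole the Katz--Tao constant $K_T$ of each tube, meaning the largest ratio $M(P_\rho)/(\rho^{-1}R^{1/2})$ over all $\rho$ and all fat tubes $P_\rho\supset T$, to a dyadic value $K$ with $\#\T\lessapprox\#\T_K$.
\item Split $\T_K$ into $\lessapprox K$ Katz--Tao families with constant $O(1)$ and apply Wang--Wu to each. This gives $\#\T_K\lessapprox KR^{-1/2+\eta_2/2}\lambda^{-3/2}\#\sQ_X$.
\item Each $T\in\T_K$ lies in some $P_\rho$ with $M(P_\rho)\gtrsim K\rho^{-1}R^{1/2}$. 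Combine this with $\sum_{P_\rho}M(P_\rho)^q\lesssim R^{q/2-1}\rho^{1-q}\sum_Q m(Q)^q$, which is the H\"older argument of Theorem \ref{thm:planarIncidenceKakeya}. This gives $\#\T_K\lessapprox K^{1-q}R^{-1/2}\sum_Q m(Q)^q$.
\item Multiply the $(q-1)$-st power of the first bound by the second, so that $K$ cancels. Then $I\sim\lambda R^{1/2}\#\T$ finishes the proof.
\end{enumerate}
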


\begin{proof}[Proof of Theorem \ref{thm:multMoment} from Theorem \ref{thm:planarIncidence}]
    As in Section \ref{sec:p<3}, let $\{U\}$ be parallel plates which capture the tubes of $\T$ coming from a strip $S$. By H\"{o}lder's inequality, we have \begin{align*}
        \left(\sum_{Q\in \sQ_X}m_S^X(Q)\right)^{\frac{p}{2}}  &= \left(\sum_U\sum_{Q\in \sQ_{X\cap U}}m_S^X(Q)\right)^{\frac{p}{2}}\\
        &= \left(\sum_U(\#\sQ_{X\cap U})^{1-\frac{2}{p}}\sum_{Q\in \sQ_{X\cap U}}\frac{m_S^X(Q)}{(\#\sQ_{X\cap U})^{1-\frac{2}{p}}}\right)^{\frac{p}{2}}\\
        &\le \left(\sum_U \#\sQ_{X\cap U}\right)^{\frac{p}{2}-1}\left(\sum_{U}\frac{1}{(\#\sQ_{X\cap U})^{\frac{p}{2}-1}}\left(\sum_{Q\in \sQ_{X\cap U}}m_S^X(Q)\right)^{\frac{p}{2}}\right).
    \end{align*}
    Note that $\#\sQ_{X}= \sum_U \#\sQ_{X\cap U}$ and further, we can apply Theorem \ref{thm:planarIncidence} to the second term above. This immediately gives Theorem \ref{thm:multMoment}.
\end{proof}

We now apply the two-ends Furstenberg estimate to prove Theorem \ref{thm:planarIncidence} and hence, finish the proof of Theorem \ref{thm:multMoment}. We state the two-ends Furstenberg estimate, adapted to our setting and notation.

\begin{definition}
    We say a collection of tubes $\T_0$ in $\R^2$ is Katz-Tao $1$-dimensional with constant $K$ if for every dyadic scale $1\le \rho\lesssim R^{1/2}$ and every $R\times R\rho^{-1}$ fat tube $P$, we have $$\#\{T\in \T_0 : T\subset P\}\le K \frac{R\rho^{-1}}{R^{1/2}}=K\rho^{-1}R^{1/2}.$$
\end{definition}

\begin{theorem}\label{thm:WangWuIncidence}
    (Wang--Wu \cite[Theorem 1.3]{WangWuFurstenberg}) Let $\T_0\subset \T$ be a collection of tubes in $\R^2$ which is Katz-Tao $1$-dimensional with constant $O(1)$, and $X$ a collection of $R^{1/2}$-squares as in Theorem \ref{thm:planarIncidence}. Then, $$\#\T_0 \lessapprox R^{\eta_2/2} R^{-1/2}\lambda^{-3/2}\#\sQ_X.$$ 
\end{theorem}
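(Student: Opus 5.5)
The plan is to obtain Theorem \ref{thm:WangWuIncidence} by rescaling to the unit square and quoting \cite[Theorem 1.3]{WangWuFurstenberg}. The only real work is checking that our hypotheses imply theirs, and tracking the losses.

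\textbf{Rescaling and the target bound.} Dilate $B_R$ by $R^{-1}$ and set $\delta=R^{-1/2}$. Each $T\in\T_0$ becomes a $\delta\times 1$ tube, and each $R^{1/2}$-square $Q$ becomes a $\delta$-square of area $\delta^2$. The shading $\mathcal{X}(T)$ becomes a shading $Y(T)\subset T$ with $|Y(T)|\sim \lambda R^{1/2}\delta^2=\lambda|T|$, so $Y(T)$ is $\lambda$-dense. The union $\bigcup_T Y(T)$ is contained in the rescaled $\sQ_X$, whence
$$\Big|\bigcup_{T\in\T_0}Y(T)\Big|\le \delta^2\#\sQ_X.$$
The Katz--Tao condition on $R\times R\rho^{-1}$ fat tubes rescales to the statement that $\T_0$ is a Katz--Tao $(\delta,1)$-set with constant $O(1)$. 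The Wang--Wu estimate in the case $s=1$ reads
$$\Big|\bigcup_T Y(T)\Big|\gtrsim_{\varepsilon}\delta^{\varepsilon}\lambda^{3/2}\sum_T|T|=\delta^{\varepsilon}\lambda^{3/2}\delta\,\#\T_0 .$$
Combining the two displays gives $\#\T_0\lesssim \delta^{-\varepsilon}\lambda^{-3/2}\delta\,\#\sQ_X=\delta^{-\varepsilon}R^{-1/2}\lambda^{-3/2}\#\sQ_X$, which is the claimed bound. Choosing $\varepsilon\le\eta_2$ makes $\delta^{-\varepsilon}=R^{\varepsilon/2}\le R^{\eta_2/2}$.

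\textbf{Matching the two-ends hypothesis.} Wang--Wu require that for some exponent $\varepsilon_1>0$, every ball $B(x,r)$ satisfies
$$|Y(T)\cap B(x,r)|\lesssim r^{\varepsilon_1}|Y(T)|.$$
Their constant then depends on $\varepsilon_1$, and the $\delta^{-\varepsilon}$ loss is allowed to depend on $\varepsilon_1$. Our hypothesis controls a single scale: after rescaling, each segment $J$ of length $r_0:=R^{-\eta_2}$ carries at most a fraction $R^{-\eta_3}$ of $Y(T)$. Take $\varepsilon_1=\eta_3/\eta_2$, so that $r_0^{\varepsilon_1}=R^{-\eta_3}$.
\begin{itemize}
\item For $r\ge r_0$, a ball of radius $r$ meets $O(r/r_0)$ segments $J$. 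This gives the fraction bound $\min\{1,(r/r_0)R^{-\eta_3}\}$, which is $\lesssim r^{\varepsilon_1}$ because $\varepsilon_1\le 1$ and the comparison holds at both endpoints.
\item For $r<r_0$ we only know the fraction is at most $R^{-\eta_3}$, which is larger than $r^{\varepsilon_1}$. This is the main obstacle.
\end{itemize}

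\textbf{Handling the small scales.} My first attempt is to observe that only the one-scale two-ends property is used. Wang--Wu's induction-on-scales proof invokes the two-ends condition to rule out concentration at a single intermediate scale, and the non-concentrated alternative at scales below $r_0$ can be absorbed into a loss of $r_0^{-O(1)}$. Concretely, I would cover the plane by $r_0$-squares $B$ and run the Furstenberg estimate at the level of the coarse tubes. Within each $B$, the pieces $Y(T)\cap B$ are treated with only the trivial bound, which costs a factor polynomial in $R^{\eta_2}$. This loss is then absorbed into the permitted $R^{\eta_2/2}$, after shrinking $\eta_2$ relative to the loss exponent if necessary.

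Failing that, the fallback is a variant of Wang--Wu's theorem in which the two-ends condition is imposed only at the scale $r_0$. I expect that the same proof goes through with a loss of $C_{\eta_3/\eta_2}R^{O(\eta_2)}$. Since $\eta_3=\epsilon^3$ and $\eta_2=\epsilon^2$ (as fixed in the proof of Theorem \ref{thm:strongStripDec}), this loss is still $R^{o(1)}$, which is what the induction in Theorem \ref{thm:strongStripDec} needs.
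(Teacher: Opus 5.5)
Your first paragraph (rescale by $R^{-1}$, set $\delta=R^{-1/2}$, use $|\bigcup_T Y(T)|\le\delta^2\#\sQ_X$ and $|Y(T)|\sim\lambda\delta$, and read the Katz--Tao condition as a Katz--Tao $(\delta,1)$ condition) is all the paper's citation amounts to. The rest of the proposal, however, rests on a misreading of the cited theorem and does not close.

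\textbf{What the cited theorem actually assumes.} As far as I recall, Wang--Wu's two-ends Furstenberg theorem uses \emph{$(\varepsilon_1,\varepsilon_2)$-two-ends} shadings. This is a single-scale condition: $|Y(T)\cap B|\le\delta^{\varepsilon_2}|Y(T)|$ for every $\delta^{\varepsilon_1}$-ball $B$. Its conclusion is $|\bigcup_T Y(T)|\ge c_{\varepsilon,\varepsilon_2}\delta^{\varepsilon}\delta^{\varepsilon_1/2}\lambda^{1/2}\sum_T|Y(T)|$. Take $\delta^{\varepsilon_1}=R^{-\eta_2}$, the rescaled length of a segment $J$; a ball of that radius meets $O(1)$ segments. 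Take $\delta^{\varepsilon_2}\approx R^{-\eta_3}$. Then the hypothesis of Theorem \ref{thm:planarIncidence} matches theirs. Moreover $\delta^{-\varepsilon_1/2}=\delta^{-\eta_2}=R^{\eta_2/2}$ is exactly the explicit factor in the statement, so nothing further needs proving. That factor would be pointless in your version, where the only loss is $\delta^{-\varepsilon}$ with $\varepsilon$ arbitrary. Its presence signals a loss tied to the two-ends scale.

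\textbf{Why your all-scales route fails, and not only below $r_0$.} On $[r_0,1]$ you have $(r/r_0)R^{-\eta_3}=(r/r_0)^{1-\varepsilon_1}r^{\varepsilon_1}\ge r^{\varepsilon_1}$. So $\min\{1,(r/r_0)R^{-\eta_3}\}$ lies \emph{above} $r^{\varepsilon_1}$ between the endpoints, not below. Agreement at the endpoints proves nothing for a profile that is concave in log-log coordinates. This failure actually occurs: if $Y(T)$ occupies $R^{\eta_3}$ consecutive segments, all of it lies in one ball of radius $r\sim R^{\eta_3-\eta_2}$. There $r^{\varepsilon_1}=R^{-\eta_3+\eta_3^2/\eta_2}$, a negative power of $R$. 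Hence no all-scales two-ends condition with an $R$-independent constant follows from the hypothesis.

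\textbf{The small scales and the fallback.} For $r<r_0$ you give only a sketch, and its absorption step is invalid as written. A loss $R^{C\eta_2}$ cannot be pushed below $R^{\eta_2/2}$ by shrinking $\eta_2$, since both exponents scale with $\eta_2$. Your fallback, a single-scale variant of Wang--Wu, is the right object; it is in fact the theorem being cited, with the sharper loss $\delta^{-\varepsilon_1/2}$. As you state it, though, it is an expectation rather than an argument. The $R^{O(\eta_2)}$ loss you predict would suffice for the induction in Theorem \ref{thm:strongStripDec}, but not for the stated $R^{\eta_2/2}$.
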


\begin{proof}[Proof of Theorem \ref{thm:planarIncidence}]

For each dyadic $1\le \rho \lesssim R^{1/2}$, let $\{P_\rho\}$ be a system of $R\times R\rho^{-1}$ tubes covering $B_R$. For each $T\in \T$, define $K_T$ to be the best constant such that for any $\rho$ and any $P_\rho$ with $T\subset P_\rho$, $$M(P_\rho)=\#\{T^{'}\in \T : T^{'}\subset P_\rho\}\le K_T \rho^{-1}R^{1/2}.$$ Pigeonhole to get a dyadic $K$ so that $$\#\T\lessapprox \#\T_{K}$$ and for each $T\in \T_{K}$, $K_T\sim K$. Thus, $\T_{K}$ is Katz-Tao $1$-dimensional with constant $K$. By standard arguments (see for instance, \cite[Lemmas 2.8  and 2.9]{DemeterWang25}), we may partition $\T_{K}$ into roughly $K$ many subcollections of tubes which are Katz-Tao $1$-dimensional with constant $O(1)$ and apply Theorem \ref{thm:WangWuIncidence} to each subcollection. This gives \begin{equation}\label{eq:T_chiI}
    \#\T_{K}\lessapprox K R^{-\frac{1}{2}+\frac{\eta_2}{2}}\lambda^{-\frac{3}{2}}\#\sQ_X.
\end{equation} Alternatively, $$\#\T_{K}K^{\frac{p}{2}-1}\le \sum_{\rho}\sum_{P_\rho}\sum_{T\subset P_\rho}(M(P_\rho)\rho R^{-1/2})^{\frac{p}{2}-1}\le \sum_{\rho}\sum_{P_{\rho}} M(P_{\rho})^{\frac{p}{2}
}\rho^{\frac{p}{2}-1}R^{\frac{1}{2}-\frac{p}{4}}.$$ By the same argument in Theorem \ref{thm:planarIncidenceKakeya}, $$\sum_{P_{\rho}}M(P_\rho)^{\frac{p}{2}}\lesssim R^{\frac{p}{4}-1}\rho^{1-\frac{p}{2}}\sum_{Q\subset B_R}m(Q)^{\frac{p}{2}}.$$ This gives \begin{equation}\label{eq:T_chiII}
    \#\T_{K} \lessapprox K^{1-\frac{p}{2}}R^{-1/2}\sum_{Q\subset B_R}m(Q)^{\frac{p}{2}}.
\end{equation} Together \eqref{eq:T_chiI} and \eqref{eq:T_chiII} give $$(\#\T)^{p/2}\lessapprox R^{\eta_2-\frac{p}{4}}\lambda^{\frac{3}{2}-\frac{3p}{4}}(\#\sQ_X)^{\frac{p}{2}-1}\sum_{Q\subset B_R}m(Q)^{\frac{p}{2}}.$$ Therefore, $$\left(\sum_{Q\in \sQ_X}m^X(Q)\right)^{p/2}\sim \lambda^{p/2}R^{\frac{p}{4}}(\#\T)^{p/2}\lessapprox R^{\eta_2}\lambda^{\frac{3}{2}-\frac{p}{4}}(\#\sQ_X)^{\frac{p}{2}-1}\sum_{Q\subset B_R}m(Q)^{\frac{p}{2}}.$$ 
\end{proof}

\bibliographystyle{plain}
\bibliography{ref}

\end{document}